\newcommand{\bbN}{\mathbb{N}}
\newcommand{\cgC}{\mathcal{C}}
\newcommand{\cgF}{\mathcal{F}}
\newcommand{\cgG}{\mathcal{G}}
\newcommand{\cgS}{\mathcal{S}}
\newcommand{\cgV}{\mathcal{V}}
\newcommand{\tup}{\operatorname{\cgS}}
\newcommand{\otup}{\operatorname{\overline{\cgS}}}
\newcommand{\oS}{\overline{S}}
\let\leq\leqslant
\let\geq\geqslant
\let\epsilon\varepsilon
\let\setminus-
\begin{document}

\title{Graph Reconstruction with Connectivity Queries}
%
%
\author{Kacper Kluk\inst{1} \and Hoang La\inst{2} \and Marta Piecyk\inst{3}}
\authorrunning{K. Kluk et al.}
%
\institute{University of Warsaw, Faculty of Mathematics, Informatics and Mechanics \\ \email{kacper.kluk@mimuw.edu.pl} \and
LISN, Université Paris-Saclay, CNRS, Gif-sur-Yvette, France.\\ \email{hoang.la.research@gmail.com} \and
Warsaw University of Technology, Faculty of Mathematics and Information Science\\ \email{marta.piecyk.dokt@pw.edu.pl}}
\maketitle              
\begin{abstract}
We study a problem of reconstruction of connected graphs where the input gives all subsets of size k that induce a connected subgraph. Originally introduced by Bastide \textit{et al.} (WG 2023) for triples ($k=3$), this problem received comprehensive attention in their work, alongside a study by Qi, who provided a complete characterization of graphs uniquely reconstructible via their connected triples, i.e. no other graphs share the same set of connected triples. Our contribution consists in output-polynomial time algorithms that enumerate every triangle-free graph (resp. every graph with bounded maximum degree) that is consistent with a specified set of connected $k$-sets. Notably, we prove that triangle-free graphs are uniquely reconstructible, while graphs with bounded maximum degree that are consistent with the same $k$-sets share a substantial common structure, differing only locally. We suspect that the problem is NP-hard in general and provide a NP-hardness proof for a variant where the connectivity is specified for only some $k$-sets (with $k$ at least 4).

\keywords{Graph reconstruction \and Triangle-free graphs \and Bounded maximum degree.}
\end{abstract}

\section{Introduction}


Graph reconstruction primarily refers to the concept popularized by the Graph Reconstruction Conjecture by Kelly and Ulam~\cite{Kelly1942,Ulam1960}. This conjecture states that any graph with a minimum of three vertices can be uniquely reconstructed, up to isomorphism, using the multiset of subgraphs resulting from the removal of exactly one vertex. However, in this context, we delve into the reconstruction of labeled graphs, where we lack explicit knowledge about the structures of subgraphs of order $n-1$, as proposed in the Graph Reconstruction Conjecture. Instead, our focus lies on reconstructing graphs based solely on the connectivity of their subgraphs. Specifically, we address the task of reconstructing graphs by querying if a subgraph of order $k$ is connected, for some fixed integer $k\geq 2$. This variant of graph reconstruction was recently introduced by Bastide, Cook, Erickson, Groenland, Kreveld, Mannens, and Vermeulen~\cite{BCEGKMV2023}, initially for triples, and more broadly for sets of vertices of size $k$, which we term $k$-sets for brevity.

Some problems lend themselves to straightforward solutions when access to answers for all possible queries is available, such as graph reconstruction using a distance oracle~\cite{KMZ2018}. In such scenarios, our focus shifts towards optimizing the number of queries utilized. In graph reconstruction via connectivity queries on $k$-sets, it is evident that for $k=2$, we are effectively querying the edges of the graphs. However, for $k\geq 3$, even identifying a graph consistent with the answers to all queries becomes a nontrivial task.

Graph reconstruction inherently suggests the existence of a target graph we aim to rebuild. Yet, within this context, we may encounter arbitrary connected and disconnected $k$-sets. The questions regarding the existence and uniqueness of graphs satisfying these queries become significantly more intriguing. This problem bears potential applications in computational networks where concealing its exact structure is imperative while still being able to respond to local queries about potential data transfers to a sub-network, which would require connectivity, is necessary.

\textbf{Hardness of the general case.} For the case where $k=3$, Bastide et al. showed an enumeration algorithm applicable to general graphs, leveraging a reduction of the problem to 2-SAT. Moreover, Qi~\cite{Qi2023} provided a characterization of graphs that are uniquely reconstructible from connected triples. However, extending this reduction to $k\geq 4$ merely leads to the problem being reduced to 3-SAT or simply SAT. Moreover, we suspect that for $k\geq 4$, the task of identifying a graph consistent with a given set of connected $k$-sets may be NP-hard in general. To substantiate this suspicion, we show NP-hardness for a modified version of this problem, wherein not every connected and disconnected $k$-set is given. We point out, that such a version of the problem is still tractable for $k=3$, as the reduction to $2$-SAT can handle with an uncomplete information.

\textbf{Motivation.} Shifting our focus away from the general scenario, what if we possess additional information regarding the target graph or specific graph classes? Our attention naturally gravitates towards classes wherein existence can be efficiently verified or uniqueness can be established structurally. For instance, what if we know that our target graph is triangle-free or has bounded maximum degree? In what follows, we delve into the rationale and intuition behind our findings.

The graphs that we aim to rebuild are all labeled, finite, and simple. It is essential to note that this form of reconstruction does not always yield a unique labeled graph: a path on vertices $a$, $b$, $c$, $d$ has the same connected triples as the isomorphic path on $a$, $c$, $b$, $d$. More generally, we do not always obtain isomorphic graphs. For example, a clique and a clique lacking a matching have identical connected $k$-sets. We point out that from the connected $k$-sets, we are able to determine the vertex sets of all connected components of size at least $k$, but we cannot distinguish connected components of size at most $k-1$. Therefore, we will always assume that our graphs are connected and reasonably large.

Here, we direct your attention to two specific examples illustrating the challenge of reconstructing graphs solely through connectivity. The first example revolves around the existence of edges between twin vertices. Twin vertices share precisely the same neighbors, excluding themselves. Consequently, when examining their connected $k$-sets, we cannot discern between a graph where they share an edge and one where they do not.

The second example involves a star or, more generally, a graph with a universal vertex. Consider $G'$ a disconnected graph, where each connected component has order of at most $k-1$. Now, let $G$ be $G'$ with an additional universal vertex. Analogous to the previous example, distinguishing between $G$ and a star of degree $|V(G')|$ based on connected $k$-sets is impossible.

\textbf{Our results.} In both of the previous examples, with the additional assumption of triangle-freeness, we are left with only one potential outcome. Indeed, not only do we prove that enumerating connected triangle-free graphs that are consistent with some given connected $k$-sets can be done efficiently, but we also show that when such a graph exists, there are no alternative connected triangle-free graphs sharing the same $k$-sets. More formally, we prove the following statements.

\begin{theorem}
Let $k$ be an integer with $k \geq 2$. For a complete set of connected and disconnected $k$-sets on $V$, we can enumerate every connected triangle-free graph on $V$ consistent with such sets in polynomial time in $|V|$.
\end{theorem}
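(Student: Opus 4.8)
For a fixed $k$ the number of $k$-sets is $\binom{|V|}{k}=O(|V|^k)$, so checking whether a given graph is consistent with the input takes polynomial time; it therefore suffices to produce, in polynomial time, a polynomial-size list of candidates that provably contains every connected triangle-free graph consistent with the input. We will show that, once $|V|$ is large enough relative to $k$ (as the paper assumes throughout), this list can be taken to be a single canonical graph $G_0$ that we reconstruct directly; for the finitely many small values of $|V|$ one brute-forces over all labelled graphs, which is constant — hence polynomial — for fixed $k$. The reconstruction of $G_0$ has two stages: first we upgrade the input into a connectivity oracle for all large vertex subsets, and then we use triangle-freeness to read off the adjacency of every pair.

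\textbf{Stage 1: connectivity of all sets of size $\ge k$.} We claim that for every $A\subseteq V$ with $|A|\ge k$ one can decide from the input whether $G[A]$ is connected. Define the auxiliary graph $\Gamma_A$ on vertex set $A$, with an edge $xy$ whenever some connected $k$-subset $S$ satisfies $x,y\in S\subseteq A$. Every such $S$ induces a connected subgraph of $G[A]$, so connectivity of $\Gamma_A$ implies connectivity of $G[A]$. Conversely, suppose $\Gamma_A$ is disconnected, with parts $P$ and $Q$; then no connected $k$-subset of $A$ meets both $P$ and $Q$. If $G[A]$ were connected it would contain an edge between $P$ and $Q$, and starting from its endpoints a breadth-first search inside $G[A]$ (possible since $|A|\ge k$ and $G[A]$ is connected) grows a connected $k$-set $S\subseteq A$ meeting both $P$ and $Q$, a contradiction. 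Hence $G[A]$ is connected iff $\Gamma_A$ is, and $\Gamma_A$ is computable in $O(|A|^k)$ time.

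\textbf{Stage 2: recovering the edge set via triangle-freeness.} It remains to decide, for each pair $\{u,v\}$, whether $uv\in E(G)$. Triangle-freeness is the crucial leverage: adjacent vertices have disjoint neighbourhoods, since a common neighbour would close a triangle, so in particular $u\sim v$ rules out $u$ and $v$ having a common neighbour. The plan is to exhibit a polynomially checkable connectivity criterion $C(u,v)$ — phrased in terms of whether $u$ and $v$ survive in a common component after deleting various structured vertex sets that still leave at least $k$ vertices, each query answered via Stage 1 — such that $uv\in E(G)$ iff $C(u,v)$ holds, \emph{provided $\{u,v\}$ is not one of the degenerate pairs for which the input is genuinely ambiguous}. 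Those degenerate pairs are exactly the ones highlighted in the introduction: a pair of twins, and a pair consisting of the centre of a star-like part and one of its leaves. For each of them triangle-freeness singles out one answer — twins in a large connected triangle-free graph are never adjacent, and a vertex that looks universal forces the whole graph to be a star — so adjacency is pinned down in every case and the resulting graph $G_0$ is unique.

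\textbf{Main obstacle.} The heart of the argument, and where triangle-freeness must be used most carefully, is Stage 2: establishing the criterion $C(u,v)$ and, above all, making it \emph{polynomial}. A naive characterisation (``$u$ and $v$ lie in the same component of $G-X$ for every $X$ with $|V\setminus X|\ge k$'') is correct once $|V|$ is large but ranges over exponentially many sets $X$; the real work is to show that in a triangle-free graph it suffices to test a polynomial family of candidate separators dictated by the constrained (though not-yet-known) neighbourhood structure, and to carry out the case analysis separating true edges from non-edges at distance $2$ and at distance $\ge 3$ while correctly detecting and resolving the twin and universal-vertex configurations. A secondary, bookkeeping obstacle is to fix the precise ``reasonably large'' threshold on $|V|$ in terms of $k$: it must ensure that every $k$-set can be padded inside $V$ and that small-graph coincidences such as the two labelled $P_4$'s with identical connected triples cannot occur, with brute-force enumeration below the threshold. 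Once $E(G_0)$ is determined we output $G_0$ if it is triangle-free, connected and consistent with the input, and nothing otherwise; by the above this is precisely the set of connected triangle-free graphs consistent with the input, and the whole procedure runs in time polynomial in $|V|$ for fixed $k$.
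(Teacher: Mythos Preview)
Your Stage~1 observation is correct and clean: connectivity of any $A$ with $|A|\ge k$ is indeed recoverable from the connected $k$-subsets of $A$. But Stage~2 is not a proof; it is a promise of a proof. You write that ``the plan is to exhibit a polynomially checkable connectivity criterion $C(u,v)$'' and then, in the very paragraph labelled \emph{Main obstacle}, concede that ``the real work is to show that in a triangle-free graph it suffices to test a polynomial family of candidate separators''---and you never do that work. No candidate family is proposed, no case analysis is carried out, and the degenerate cases (twins, universal vertex) are asserted to be resolvable without argument. As it stands, Stage~2 contributes nothing beyond the statement that the problem is nontrivial. You also silently rely on uniqueness above a threshold in order to output a single $G_0$; that uniqueness is itself a separate theorem in the paper with its own proof, so invoking it here either duplicates work or begs the question.

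For comparison, the paper's algorithm does not attempt to decide each pair $\{u,v\}$ in isolation via global connectivity queries. Instead it \emph{guesses a small seed}: either a vertex $u$ of degree $\ge 2k-4$ together with $2k-4$ of its neighbours, or (if no such vertex exists) an arbitrary connected $k$-set $T$ together with its entire neighbourhood, which is then of bounded size. From this seed it recovers $N[T]$ exactly, then runs a BFS-style layering (\Cref{lem:alg_layers}) that determines all cross-layer edges, and finally uses triangle-freeness to settle each remaining intra-layer pair via a single explicit $k$-set built from already-known edges in lower layers (\Cref{lem:k3_free_alg_finish}). The polynomially many guesses yield a polynomial list of candidates, which is then filtered. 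The crucial difference is that the paper always queries a $k$-set in which $k-2$ vertices are already known to be connected and non-adjacent to the vertex in question, so the answer directly reveals one edge; your approach, lacking any known substructure to anchor the queries, has no such lever and you have not supplied a substitute.
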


\begin{theorem}
Let $k$ be an integer with $k\geq 3$. If $G$ is a large enough triangle-free connected graph, then $G$ is uniquely reconstructible in the class of triangle-free graphs. 
\end{theorem}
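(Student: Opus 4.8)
The plan is to prove the (formally stronger) statement that the edge set of a sufficiently large triangle-free connected graph $G$ is determined by its family $\cgC_k(G)$ of connected $k$-sets. Uniqueness is then immediate: any triangle-free connected $H$ on the same vertex set with $\cgC_k(H)=\cgC_k(G)$ is itself large, so applying the same extraction rule to the same data gives $E(H)=E(G)$ and hence $H=G$. In the language of the enumeration theorem above, this amounts to refining the analysis of that algorithm so as to show it makes no genuine choices on an input realizable by a large triangle-free connected graph.

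First I would record a class-independent fact: a pair $\{u,v\}$ lies in some connected $k$-set if and only if $\dist_G(u,v)\le k-1$. Indeed a shortest $u$–$v$ path has at most $k$ vertices and (using $|V|\ge k$ and connectivity) can be grown to a connected $k$-set by repeatedly adding a neighbour, while conversely a connected $k$-set containing both $u$ and $v$ contains a $u$–$v$ path on at most $k$ vertices. This recovers for free the graph $R$ of pairs at distance at most $k-1$; in particular every pair at distance at least $k$ is certified to be a non-edge, and it remains to decide adjacency among the pairs of $R$.

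The heart of the argument is this decision, and triangle-freeness is the lever: if $uv\in E(G)$ then $N(u)\cap N(v)=\emptyset$, whereas $\dist_G(u,v)=2$ forces $N(u)\cap N(v)\ne\emptyset$. I would prove a reconstruction lemma asserting that, in a sufficiently large triangle-free connected graph, the family $\{S\in\cgC_k(G):u,v\in S\}$, together with the analogous families for the boundedly many pairs near $\{u,v\}$, determines whether $u$ and $v$ share a neighbour, and more generally lets one read off each vertex's neighbourhood. The guiding idea is that triangle-freeness rigidifies short paths — a length-$2$ connection $u$–$w$–$v$ cannot collapse to a single edge $uv$ — and that in a large enough graph this rigidity percolates through the connected $k$-sets to pin down every adjacency. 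Turning this into a proof, with an explicit size threshold that necessarily grows with $k$ (already $P_5$ is not reconstructible from its connected $4$-sets), is where the real work lies.

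Finally, the only structural phenomena that could still spoil reconstruction are the two flagged in the introduction, and triangle-freeness kills both. A pair of twins $u,v$ joined by an edge would be invisible, but twins with a non-empty common neighbourhood in a triangle-free graph are non-adjacent, and ``connected and large enough'' excludes the degenerate case of an empty common neighbourhood; and a vertex universal to an otherwise low-diameter remainder would be indistinguishable from a star, but triangle-freeness forces that remainder to be edgeless, i.e.\ forces the star itself. Thus every local ambiguity collapses to a single answer. I expect the reconstruction lemma of the previous paragraph to be the main obstacle: to rule out \emph{global} ambiguities I would assume a counterexample $H\ne G$ with $\cgC_k(H)=\cgC_k(G)$, select a pair on which $G$ and $H$ disagree that is extremal in a suitable sense (say, $G$-closest to a vertex of small eccentricity), and show that the disagreement either propagates to a $k$-set on which $G$ and $H$ differ or forces a triangle in one of them.
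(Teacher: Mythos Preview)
Your proposal is an outline, not a proof: the object you call the ``reconstruction lemma'' --- that the connected $k$-sets through $\{u,v\}$ and nearby pairs determine whether $u$ and $v$ share a neighbour --- is exactly the crux, and you defer it entirely (``turning this into a proof \dots\ is where the real work lies''). The extremal-pair contradiction sketched at the end is too vague to substitute for it, and the discussion of twins and universal vertices only addresses two specific obstructions rather than proving that no others exist. In short, the missing idea is a concrete mechanism that, for an arbitrary non-adjacent pair $u,v$ in $G$, forces $uv\notin E(G')$ in every triangle-free $G'$ with the same $k$-sets.

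The paper supplies this mechanism via two ingredients you do not have. First, it introduces \emph{clear non-neighbours}: if some $S\in\tup_k(G)$ contains $v$ but not $u$ and every swap $(S\setminus\{v'\})\cup\{u\}$ is disconnected, then $uv$ is a non-edge in every consistent graph. Second, for the remaining (``fake'') non-adjacent pairs, a structural lemma shows that the components of $u$ and $v$ in $G-(N(u)\cap N(v))$ together have at most $2k-2$ vertices; in particular $u,v$ do share a common neighbour $z$. The proof then splits on $\Delta(G)$: if some vertex has degree $\ge 3k-6$, uniqueness of $K_{1,r}$ plus the algorithmic lemma finishes it; otherwise $\Delta(G)\le 3k-7$, so $G-(N(u)\cap N(v))$ has at most $(3k-7)^2$ components and, when $|V(G)|\ge (2k-2)(3k-7)^2+3k-6$, one component $C$ has size $\ge 2k-1$. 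Every $w\in C$ is then a clear non-neighbour of both $u$ and $v$, and a $k$-set through $u$, $z$, and $C$ forces $uz\in E(G')$ (similarly $vz\in E(G')$), whence triangle-freeness of $G'$ gives $uv\notin E(G')$. Your distance-$\le k-1$ observation is correct but plays no role in this argument; what you are missing is the clear/fake dichotomy and the component-size bound that together replace your unproved lemma.
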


In the second example, the presence of the universal vertex, ensuring the graph's connectivity, opens the door to a multitude of different structures that can be concealed within a set of vertices spanning (almost) the entire graph. Intuitively, bounding the maximum degree of the graph restricts such variability to only sets of constant size, at most the maximum degree. While this constraint does not guarantee uniqueness -- in fact, quite the opposite; we may still have an exponential number of possible subquartic graphs consistent with the same connected $k$-sets (a path $u_1\dots u_n$ where each $u_i$ has two extra neighbors $v_i$ and $w_i$ that are potentially adjacent) -- we anticipate these graphs to exhibit a substantial common structure (the common path $u_1\dots u_n$ with the extra neighbors) and differing only locally (the potential edge between $v_i$ and $w_i$). Indeed, we confirm these intuitions and prove the following.

\begin{theorem}
    Let $k$ and $d$ be integers such that $k\geq 2$ and $d\geq 1$. For a complete set of connected and disconnected $k$-sets on $V$, we can enumerate every connected graph of maximum degree $d$ consistent with such $k$-sets in polynomial time in $|V|$ and the size of the output.
\end{theorem}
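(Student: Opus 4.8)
The goal is to show that the set of consistent graphs has a \emph{product structure}: there is a ``core'' graph shared by all of them, together with a family of pairwise-independent ``blobs'' of bounded size, each carrying only a constant-size list of admissible local completions. Enumerating all combinations of local completions and verifying each candidate then yields the output-polynomial algorithm. Throughout, assume $k\ge 3$ (for $k=2$ the data is exactly $E(G)$) and $|V|$ larger than a constant depending on $k,d$ (smaller instances are solved by brute force).

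First I would recover the power graph. Put $uv\in E(H)$ iff some connected $k$-set contains both $u$ and $v$. Since in a connected graph on at least $k$ vertices every connected set extends to a connected $k$-set, and a shortest $u$--$v$ path has at most $k$ vertices exactly when $\dist_G(u,v)\le k-1$, this happens iff $\dist_G(u,v)\le k-1$; hence $H=G^{k-1}$ is recoverable from the data, and $\Delta(G)\le d$ forces $\Delta(H)\le d+d^2+\dots+d^{k-1}$, a constant. Every consistent graph is a spanning subgraph of $H$, and since $G[S]$ connected forces $S$ into a ball of radius $k-1$, all ``local'' information in the data lives inside bounded-radius balls of $H$.

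The heart of the argument is the structural claim: if $G'$ is connected, has maximum degree $\le d$, and is consistent with $G$, then $G\triangle G'$ is a disjoint union of edge sets, each confined to a single module of $G$; equivalently, a pair is ambiguous only because its endpoints are twins inside a common bounded module. The intuition is that, $G$ being connected and large, any modification turning a disconnected $G[S]$ into a connected $G'[S]$ is witnessed by the data unless the two endpoints of a toggled edge behave identically across all $k$-sets, which is exactly a twin relation; iterating confines the whole symmetric difference to modules. Two facts then make this usable. In a connected graph on more than $d$ vertices, every module other than $V$ has at most $d$ vertices, since any external neighbour of the module is adjacent to all of it; so each blob has at most $d$ vertices. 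And distinct blobs are \emph{independent}: for any $k$-set $S$ not contained in a blob $U$, the connectivity of $G[S]$ does not depend on the edges inside $U$ --- if $S$ meets the external neighbourhood $M$ of $U$ then a vertex of $S\cap M$ is adjacent to all of $S\cap U$, and otherwise $S\cap U$ and $S\setminus U$ are non-adjacent in $G$ --- so the data constrains each blob only through the (at most $2^{\binom{d}{2}}$) $k$-sets contained in it.

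Granting this, the algorithm is routine. Compute $H$; using that forced edges and blobs are certified inside bounded-radius balls of $H$ (within a ball there are only constantly many candidate graphs, and one keeps those matching the $k$-sets inside the ball), determine in polynomial time the core graph and the collection of blobs; for each blob list the constantly many graphs on it that respect $\Delta\le d$ and reproduce its connected $k$-subsets. The consistent graphs are precisely the core together with one admissible completion per blob, chosen independently; enumerate all such combinations and verify each candidate against the full list of $k$-sets in $O(|V|^k)$ time, discarding any that fail. Since (by the structural claim) essentially all combinations pass, the total running time is polynomial in $|V|$ and in the number of graphs output. The main obstacle is the structural claim --- ruling out spread-out or long-range-correlated differences between two consistent graphs --- and it is exactly there that both the bounded-degree and the connectivity hypotheses are used.
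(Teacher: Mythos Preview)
Your high-level plan---a core graph plus independent bounded-size blobs---matches the paper's, but the structural claim carrying it is false. You assert that any two consistent bounded-degree graphs differ only inside modules, equivalently that an ambiguous pair must be twins. For $k=4$ this already fails: take $u\sim a,b,c$ and $v\sim a,b$, with $c$ a pendant vertex at $u$ and the rest of a large graph attached through $a$ and $b$. Then $u,v$ are not twins (they disagree on $c$), and the only module of $G$ containing both is $V$; yet the edge $uv$ can be freely toggled. Indeed, in any $4$-set $\{u,v,x,y\}$, either some $x\in\{a,b\}$ already connects $u$ to $v$, or none of $x,y$ is adjacent to $v$ and the set stays disconnected with or without $uv$ (in the case $x=c$, the fourth vertex $y$ is isolated). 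So the ``differences live in modules'' picture, and with it your independence argument---which crucially used that every external neighbour of a blob sees \emph{all} of it---collapses.

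The paper reaches the product structure by a different, constructive route. It fixes a connected $k$-set $T$, guesses the induced graph on $N[T]$ (a constant number of choices, yielding a constant-size family of skeletons rather than a single core), and runs a BFS layering from $T$. The blobs $V_j$ are the maximal sets of vertices in a layer sharing the same neighbourhood in the \emph{previous} layer; they have size at most $d$ but are \emph{not} modules in general (in the example above $\{u,v\}$ is such a blob, with $c$ one layer further out and adjacent only to $u$). Independence is then established not via the module property but through a ``one blob per $k$-set'' claim: after a clean-up step that determines every unknown edge admitting a connected $(k{-}1)$-witness through one endpoint avoiding the other, every $k$-set has all its remaining connectivity-relevant unknown edges inside a single $V_j$. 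That claim is the correct replacement for your twin/module assertion, and its proof is where the real work lies.
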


This theorem comes from a more technical result in which we introduce the concept of a ``skeleton'' which captures this large common structure shared among consistent graphs and explicitly shows their local differences.

\textbf{Outline.} In \Cref{sec:prelim}, we introduce basic notations as well as notions pertinent to our reconstruction problem. \Cref{sec:alg} describes our enumeration algorithms for both triangle-free and bounded maximum degree graphs. Following this, \Cref{sec:uniqueness} presents the structural result on uniquely reconstructible triangle-free graphs. 
Finally, in the Appendix, we provide the NP-hardness proof of the variant of this problem, the proofs of the statements marked with (*), as well as some extra technical propositions required for our results.

\section{Preliminaries} \label{sec:prelim}
For a positive integer $n$, by $[n]$ we denote the set $\{1,\ldots,n\}$. We use standard graph notations, i.e., a graph is a pair $G=(V,E)$, where $V$ is the set of vertices and $E\subseteq \binom{V}{2}$ is the set of edges of $G$. By $\deg_G(v)$ we denote the degree of $v$ in $G$ and by $N_G(v)$ we denote the set of neighbors of $v$ in $G$. We denote the closed neighborhood of $v$ by $N_G[v]=N_G(v)\cup\{v\}$. Similarly, for $S\subseteq V$, the closed neighborhood of $S$ is $N_G[S]=\bigcup_{v\in S}N_G[v]$ and the open neighborhood of $S$ is $N_G(S)=N_G[S]-S$. For a set of vertices $S$, by $G[S]$, we denote the subgraph of $G$ induced by $S$. If $G$ is clear from the context, we omit the subscript $G$.

\textbf{Partial graphs.} A \emph{partial graph} is a tuple $H=(V,E,E_N,E_U)$, where $V$ is the vertex set, $E,E_N,E_U\subseteq \binom{V}{2}$, and $E\cup E_N\cup E_U= \binom{V}{2}$. The sets $E,E_N,E_U$ denote respectively (known) edges, (known) non-edges and unknown edges. In particular, a graph $G=(V,E)$ can be seen as a partial graph $(V,E,\binom{V}{2}\setminus E,\emptyset)$. 

For a partial graph $H=(V,E,E_N,E_U)$, we say that a partial graph $H'$ on $V$ is a \emph{partial supergraph} $H'$ of $H$ if $E\subseteq E(H')$, $E_N\subseteq E_N(H')$, and $E_U(H')\subseteq E_U$.
We say that a graph $G$ is a \emph{supergraph} of $H$ if $E\subseteq E(G)$ and $E_N\subseteq \binom{V}{2} \setminus E$. In other words we can obtain $G$ (resp. $H'$) from $H$ by deciding for every unknown edge (resp. for some subset of unknown edges) if it is an edge or a non-edge.

For $H=(V,E,E_N,E_U)$, let $G$ be the graph $(V,E)$. We say that a set of vertices induces a connected subgraph in $H$ if it induces a connected subgraph in $G$. The open neighborhood of $S$ in $H$ is $N_H(S)=N_G(S)$.

\textbf{Connected $k$-sets.} Let $G=(V,E)$ be a graph and let $k\in \bbN$. By $\tup_k(G)$ we denote the set of all $k$-subsets of $V$ that induce a connected subgraph of $G$. Similarly, by $\otup_k(G)$ we denote the set of all $k$-subsets of $V$ that induce a disconnected subgraph of $G$. 

\textbf{Consistent graphs.} We say that \emph{a graph $G$ is consistent with $\tup_k$ and $\otup_k$} when $\tup_k(G)=\tup_k$ and $\otup_k(G)=\otup_k$. Let $H=(V,E,E_N,E_U)$ be a partial graph. We say that \emph{$H$ is consistent with $\tup_k$ and $\otup_k$} if there exists an assignment of unknown edges ($E_U$) of $H$ to obtain a graph $G$ consistent with $\tup_k$ and $\otup_k$.

\textbf{Uniquely reconstructible graphs.} We say that a graph $G$ is \emph{uniquely reconstructible in the class of graphs $\cgG$} if it is the only graph in $\cgG$ consistent with $\tup_k(G)$ and $\otup_k(G)$. We omit $\cgG$ when we consider all graphs.

\section{Enumerating consistent graphs} \label{sec:alg}

We start by showing \Cref{lem:alg_layer_single,lem:alg_layers} which allows us to find a breadth-first-search partition of the vertices of $V$ starting from some small initial set $T$ to obtain a partial graph that is consistent with $\tup_k$ and $\otup_k$. Our reconstruction algorithms consist in guessing this small set $T$ and its neighborhood, building the ``layering'' corresponding to this partition, and filling in the rest depending on the target class of graphs (triangle-free or bounded maximum degree).

\begin{lemma}\label{lem:alg_layer_single}
    Let $\tup_k, \otup_k$ be a complete set of $k$-sets on $V$. Let $H$ be a partial graph on $V$ and $T$ be a subset of $V$ such that:
    \begin{enumerate}
        \item $T$ is connected in $H$, 
        \item $|T|\geq k-1$,
        \item and $N_H(T)$ is non-empty.
    \end{enumerate}
    We can find $H'$ a partial supergraph of $H$ using at most $|V|\cdot |N_H(T)|$ queries such that:
    \begin{enumerate}
        \item for every $x\in V-N_H[T]$ and $y\in N_H(T)$, $xy\notin E_U(H')$,
        \item and for every supergraph $G$ of $H$ such that $N_G(T)=N_H(T)$ and $G$ is consistent with $\tup_k, \otup_k$, $G$ is a supergraph of $H'$.
    \end{enumerate}
\end{lemma}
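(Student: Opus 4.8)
The goal is, given a connected "core" $T$ with $|T| \ge k-1$ and a known nonempty neighborhood $N_H(T)$, to learn (via few queries) which non-neighbors of $T$ are adjacent to which neighbors of $T$, and to record all such decisions in a partial supergraph $H'$. The natural strategy is: for every ordered pair $(x,y)$ with $x \in V - N_H[T]$ and $y \in N_H(T)$ such that $xy \in E_U(H)$, perform a single query that decides whether $xy$ is an edge, and set $H'$ accordingly. Since there are at most $|V|$ choices of $x$ and $|N_H(T)|$ choices of $y$, this uses at most $|V|\cdot|N_H(T)|$ queries, matching the bound. The content is therefore in finding, for each such pair, a $k$-set whose connectivity pins down the edge $xy$ in \emph{every} supergraph $G$ with $N_G(T) = N_H(T)$.

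**The query for a pair $(x,y)$.**

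Fix $x \in V - N_H[T]$ and $y \in N_H(T)$. Since $T$ is connected in $H$ and $|T| \ge k-1$, pick a connected subset $T_y \subseteq T$ of size exactly $k-2$ such that $T_y \cup \{y\}$ is still connected in $H$ — this is possible because $y$ has a neighbor in $T$ and $T$ is connected, so we can grow a connected subtree of size $k-2$ inside $T$ containing a neighbor of $y$ (here I use $|T| \ge k-1 \ge k-2$; if $k = 2$ the set $T_y$ is empty and the argument degenerates gracefully). Now query the $k$-set $S := T_y \cup \{y, x\}$. The claim is: in any supergraph $G$ of $H$ with $N_G(T) = N_H(T)$, the set $S$ is connected in $G$ if and only if $xy \in E(G)$. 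Indeed, $T_y \cup \{y\}$ is connected in $H$ hence in $G$; and $x \notin N_H[T] \supseteq T_y$, and moreover $x \notin N_G(T)$ since $N_G(T) = N_H(T)$ — so $x$ has no neighbor in $T_y$ in $G$. Thus the only possible neighbor of $x$ inside $S$ is $y$, so $G[S]$ is connected precisely when $xy \in E(G)$. Therefore the answer to the single query on $S$ determines $xy$ identically across all such $G$, and we may record it in $H'$.

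**Assembling $H'$ and verifying the two output conditions.**

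Let $H'$ be obtained from $H$ by, for each unknown pair $(x,y)$ as above, moving $xy$ from $E_U$ into $E(H')$ or $E_N(H')$ according to the query answer on $S = T_y\cup\{x,y\}$; all other unknown edges of $H$ stay unknown, and $E(H) \subseteq E(H')$, $E_N(H) \subseteq E_N(H')$, so $H'$ is a partial supergraph of $H$. Output condition~1 is immediate: every pair $(x,y)$ with $x \in V - N_H[T]$, $y \in N_H(T)$ that was unknown in $H$ has been resolved, and any such pair already known in $H$ was never in $E_U(H) \supseteq E_U(H')$; hence $xy \notin E_U(H')$. For output condition~2, take any supergraph $G$ of $H$ with $N_G(T) = N_H(T)$ consistent with $\tup_k,\otup_k$. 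For each resolved pair, the argument above shows the recorded value of $xy$ in $H'$ equals its value in $G$ (the query answer is determined by $\tup_k$); for pairs already known in $H$, $G$ agrees with $H$ hence with $H'$; and $E_U(H') \subseteq E_U(H)$ so any unknown edge of $H'$ imposes no constraint. Thus $G$ is a supergraph of $H'$.

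**Main obstacle.**

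The only delicate point is the existence of the connected $(k-2)$-subset $T_y \subseteq T$ with $T_y \cup \{y\}$ connected, together with the edge cases $k = 2$ (where $T_y = \emptyset$ and $S = \{x,y\}$, so the query literally asks whether $xy$ is an edge) and ensuring $|S| = k$ exactly. This follows from connectivity of $T$ and $|T| \ge k-1$: take a spanning tree of $H[T]$, root it at a neighbor of $y$, and take any connected subtree of size $k-2$ containing the root. I would also double-check that $x \notin S$-issues don't arise, i.e. $x \ne y$ and $x \notin T$, both of which hold since $x \in V - N_H[T]$ while $y \in N_H(T) \subseteq N_H[T]$ and $T \subseteq N_H[T]$.
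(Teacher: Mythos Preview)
Your proof is correct and follows essentially the same approach as the paper: for each pair $(x,y)$ with $x \in V - N_H[T]$, $y \in N_H(T)$, you query the $k$-set $\{x,y\} \cup T_y$ where $T_y \subseteq T$ is a connected $(k-2)$-subset adjacent to $y$, and argue that $x$'s only possible neighbor in this set (under the hypothesis $N_G(T)=N_H(T)$) is $y$. Your treatment is in fact slightly more careful than the paper's, as you explicitly address the existence of $T_y$, the degenerate case $k=2$, and the fact that $x \notin T_y \cup \{y\}$.
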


\begin{proof}
    For every vertex $y \in N_H(T)$ let $C_y$ denote an arbitrary connected subset of $k - 2$ vertices from $T$ adjacent to $y$ in $H$.
    For every pair $y \in N_H(T), x \in V- N_H[T]$ such that $xy \in E_U(H)$, we define the $k$-set $s_{x, y} = \{x, y\} \cup C_y$. If $s_{x,y} \in \tup_k$, we put $xy$ to be an edge in $H'$. Otherwise we put $xy$ to be a non-edge.
    All other pairs in $H'$ remain defined as in $H$.
    
    It is clear that $H'$ satisfies (i).
    Let $G$ denote an arbitrary supergraph of $H$ consistent with $\tup_k, \otup_k$ such that $N_G(T) = N_H(T)$. Pick any pair $xy \in E_U(H) - E_U(H')$. By the definition of $H'$, we have $x \in V - N_H[T]$ and $y \in N_H(T)$. Since $C_y \cup \{y\}$ was connected in $H$, it is also connected in $G$. Moreover, since $N_G(T) = N_H(T)$, there are no edges between $x$ and $T$ in $G$. Therefore, $s_{x, y} \in \tup_k$ if and only if $xy \in E(G)$. This implies that $G$ is a supergraph of $H'$, which finishes the proof.\qed
\end{proof}

\begin{lemma}[\textbf{Layering}]\label{lem:alg_layers}
    Let $\tup_k, \otup_k, H, T$ be defined as in \Cref{lem:alg_layer_single}.
    We can find $H'$ a partial supergraph of $H$ together with the partition $T = L_0, L_1, \dots, L_\ell$ of $V$ using at most $|V|^2$ queries such that:
    \begin{enumerate}
        \item for every $0 \leq i < j \leq \ell$ except $i = 0, j = 1$, and every $x \in L_i, y \in L_j$, $xy \notin E_U(H')$,
        \item $L_i$ are the vertices at distance exactly $i$ from $T$ in $H'$,
        \item and for every supergraph $G$ of $H$ such that $N_G(T)=N_H(T)$ and $G$ is consistent with $\tup_k, \otup_k$, $G$ is a supergraph of $H'$.
    \end{enumerate}
    We also refer to $(L_i)_{i=0}^\ell$ as a layering.
\end{lemma}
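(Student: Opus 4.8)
The plan is to construct the layering greedily, peeling off one breadth-first-search layer at a time and calling \Cref{lem:alg_layer_single} once per layer. Set $L_0:=T$ and apply \Cref{lem:alg_layer_single} to $(H,T)$: the returned partial supergraph resolves every pair between $N_H(T)$ and $V\setminus N_H[T]$, and we additionally turn every \emph{unknown} pair between $T$ and $V\setminus N_H[T]$ into a non-edge. This is safe: there are no known edges between $T$ and $V\setminus N_H[T]$, and any supergraph $G$ of $H$ with $N_G(T)=N_H(T)$ has exactly those pairs as non-edges. Putting $L_1:=N_H(T)$ and $U_1:=L_0\cup L_1$, we obtain a partial supergraph $H_1$ of $H$ such that: $L_j$ is the set of vertices at distance $j$ from $T$ in $H_1$ for $j\le 1$; every pair between $U_1$ and $V\setminus U_1$ is resolved; the only pairs between distinct layers that may still be unknown are those between $L_0$ and $L_1$; and, by \Cref{lem:alg_layer_single}(ii), every $G$ consistent with $\tup_k,\otup_k$ with $N_G(T)=N_H(T)$ is a supergraph of $H_1$ with first two breadth-first-search layers $L_0,L_1$.

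We then iterate. Suppose we have layers $L_0,\dots,L_i$, their union $U_i$, and a partial supergraph $H_i$ of $H$ satisfying the analogous invariants: $L_j$ is the set of vertices at distance $j$ from $T$ in $H_i$; every pair between $U_i$ and $V\setminus U_i$ is resolved; the only pairs between distinct layers that may be unknown are those between $L_0$ and $L_1$; and every $G$ consistent with $\tup_k,\otup_k$ with $N_G(T)=N_H(T)$ is a supergraph of $H_i$ with breadth-first-search layers $L_0,\dots,L_i$. If $U_i=V$ we output $H':=H_i$ and $\ell:=i$. Otherwise we set $L_{i+1}:=N_{H_i}(U_i)\setminus U_i$ and apply \Cref{lem:alg_layer_single} to $(H_i,U_i)$. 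Its hypotheses hold: $U_i$ is connected in $H_i$ since every vertex of $L_j$ is adjacent to $L_{j-1}$; $|U_i|\ge|T|\ge k-1$; and $N_{H_i}(U_i)$ is non-empty, for otherwise $U_i$ would be a union of connected components of every consistent $G$ with $N_G(T)=N_H(T)$, making each such $G$ disconnected, which cannot happen as the lemma is applied only when a connected consistent graph exists. A key point is that, since every pair between $U_i$ and $V\setminus U_i$ is already resolved in $H_i$, any such $G$ also satisfies $N_G(U_i)=N_{H_i}(U_i)$, so \Cref{lem:alg_layer_single}(ii) applies to $G$ and the supergraph invariant is carried forward. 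The new partial graph $H_{i+1}$ resolves every pair between $L_{i+1}$ and $V\setminus U_{i+1}$; moreover, since the pairs incident to $L_i$ were already resolved in $H_i$ and no vertex of $V\setminus U_{i+1}$ is adjacent to $U_i$, every pair between $L_i$ and $V\setminus U_{i+1}$, and between $L_{i+1}$ and $L_0\cup\dots\cup L_{i-1}$, is a non-edge. As the new decisions in $H_{i+1}$ touch only pairs incident to $L_{i+1}$, the layers $L_0,\dots,L_i$ are unchanged, and all invariants hold with $i+1$ in place of $i$.

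Termination and the query count are then routine: each round strictly enlarges $U_i$, so after at most $|V|$ rounds we reach $U_\ell=V$; the round producing $L_j$ uses at most $|V|\cdot|L_j|$ queries by \Cref{lem:alg_layer_single} (its relevant neighborhood being exactly $L_j$), and since $L_1,\dots,L_\ell$ are pairwise disjoint the total is at most $|V|\cdot(|V|-|L_0|)<|V|^2$. At the final step, the resolved-pairs invariant is property (i) of the statement, the distance-class invariant is property (ii), and the supergraph invariant is property (iii).

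I expect the main obstacle to be the bookkeeping of which pairs remain unknown: one must check that, across distinct layers, only the pairs between $L_0$ and $L_1$ stay undecided, and that this is preserved both by later calls to \Cref{lem:alg_layer_single} (whose new decisions are confined to pairs incident to the current outermost layer) and by the interaction between sealing off an interior layer and discovering the next one. A second delicate point is the identity $N_G(U_i)=N_{H_i}(U_i)$ needed to invoke \Cref{lem:alg_layer_single}(ii) on the consistent supergraphs, together with the related fact that the iteration does not stall before covering all of $V$ -- which is precisely where one uses that the given $k$-set data is realized by a connected graph.
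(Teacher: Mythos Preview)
Your proposal is correct and follows essentially the same approach as the paper: iteratively apply \Cref{lem:alg_layer_single} to the growing union $U_i=\bar L_i$ of layers, first sealing off the pairs between $T$ and $V\setminus N_H[T]$ as non-edges, and maintain the invariants that $L_j$ is the $j$-th BFS layer, that cross-layer pairs (other than $L_0$--$L_1$) are resolved, and that every consistent $G$ with $N_G(T)=N_H(T)$ remains a supergraph. If anything, your write-up is slightly more careful than the paper's at the inductive step, since you explicitly verify $N_G(U_i)=N_{H_i}(U_i)$ before invoking \Cref{lem:alg_layer_single}(ii), whereas the paper only restates $N_G(T)=N_{H_{i-1}}(T)$.
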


\begin{proof}
    Let $H_0$ be $H$ with all pairs $xy$ for $x \in T$, $y \in V - N_H[T]$ put into $E_N(H_0)$. Let $L_0 = \bar{L}_0 = T$. For $k \geq 1$ we define recursively $L_k = N_{H_{k - 1}}(\bar{L}_{k - 1}), \bar{L}_k = \bar{L}_{k - 1} \cup L_k$ and $H_k$ to be the graph obtained by applying \Cref{lem:alg_layer_single} to $H_{k - 1}$ and the set $\bar{L}_{k - 1}$ in place of $T$. We stop once $L_k = V$ and put $\ell = k$ and $H' = H_k$. Note that $H_j$ is a supergraph of $H_i$ for every $0 \leq i < j \leq \ell$. Now we will prove that such choice of $L_i$ and $H'$ satisfies the conditions of the lemma.

    \begin{claim}
        The first point of the lemma statement holds.
    \end{claim}
    \begin{proof}
        Fix any such pair $i, j$. Assume $i = 0$. Then $x \in L_0 = T$ and $y \in V - \bar{L}_1 = V - N_{H_0}[T] = V - N_H[T]$. By the definition of $H_0$, we have $xy \notin E_U(H_0)$, hence $xy \notin E_U(H')$.
        Assume $i > 0$. Then $x \in L_i = N_{H_{i - 1}}(\bar{L}_{i-1})$ and $y \in V - \bar{L}_{i} = V - N_{H_{i - 1}}[\bar{L}_{i-1}]$. By \Cref{lem:alg_layer_single} and the definition of $H_i$, $xy \notin E_U(H_i)$, hence $xy \notin E_U(H')$.
    \end{proof}
    
    \begin{claim}
        Fix $0 \leq i, j \leq \ell, i + 2 \leq j$ and vertices $x \in L_i, y \in L_j$. Then $xy \in E_N(H')$.
    \end{claim}
    \begin{proof}
        If $i = 0$, then $x \in T, y \in V - \bar{L}_1$ and by definition of $H_0$, $xy \in E_N(H_0) \subseteq E_N(H')$.
        Assume $i > 0$. By \Cref{lem:alg_layer_single} and the definition of $H_i$, $xy \notin E_U(H_i)$. If $xy \in E(H_i)$, then $y \in N_{H_i}(x)$, hence $y \in N_{H_i}[\bar{L}_i] = \bar{L}_{i + 1}$, a contradiction. Therefore, $xy \in E_N(H_i) \subseteq E_N(H')$.
    \end{proof}
    
    \begin{claim}
        The second point of the lemma statement holds.
    \end{claim}
    \begin{proof}
        By the definition of $L_i$ for $i > 0$, $L_i \subseteq N_{H_{i - 1}}(L_{i - 1}) \subseteq N_{H'}(L_{i - 1})$, hence every vertex of $i$-th layer has a neighbour in $(i-1)$-th layer in $H'$. Fix any $i$ and $x \in L_i$. If $i = 0$, then $x \in T$. Otherwise, we can find a path from $x$ to $T$ of length $i$ in $H'$, by starting in $x$ and at every step moving to an arbitrary neighbour in previous layer. On the other hand, any path of length shorter than $i$ would contain a consecutive pair $x, y$ such that $x \in L_{j_1}, y \in L_{j_2}$ and $|j_1 - j_2| \geq 2$, which contradicts the previous claim.
    \end{proof}
    
    \begin{claim}
        The third point of the lemma statement holds.
    \end{claim}
    \begin{proof}
        From the definition of $H_0$, $N_H(T) = N_{H_0}(T)$ and all edges between $T$ and $V - N_H[T]$ are known in $H_0$, hence $N_{H_i}(T) = N_H(T)$ for all $0 \leq i \leq \ell$. Fix any supergraph $G$ of $H$ consistent with $\tup_k, \otup_k$ such that $N_G(T) = N_H(T)$.
        Clearly, $G$ is a supergraph of $H_0$. Assume that $G$ is not a supergraph of some $H_i$ for such smallest possible choice of $i$. Then, $G$ is a supergraph of $H_{i - 1}$ and we have $N_G(T) = N_{H_{i - 1}}(T)$, hence by \Cref{lem:alg_layer_single} and the definition of $H_i$, $G$ is a supergraph of $H_i$, which is a contradiction.
    \end{proof}
    This finishes the proof of the lemma.\qed
\end{proof}

\subsection{Triangle-free graphs}\label{alg:triangle-free}

The following lemma shows an unique reconstruction of a triangle-free graph given that $T$ and its neighborhood is known.

\begin{lemma}\label{lem:k3_free_alg_finish}
    Let $\tup_k, \otup_k, H, T$ be defined as in \Cref{lem:alg_layer_single}. Moreover, let all edges of $H[N_H[T]]$ be known. Then there exist at most one triangle-free supergraph $G$ of $H$ consistent with $\tup_k, \otup_k$ such that $N_G(T) = N_H(T)$. Moreover, we can find this graph or determine it doesn't exist in polynomial time.
\end{lemma}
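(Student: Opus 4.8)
The plan is to reduce reconstruction to deciding, layer by layer, the few pairs that remain unknown, and to show that each decision is forced. I would start by applying \Cref{lem:alg_layers} to $H$ and $T$, obtaining a partial supergraph $H'$ of $H$ and a layering $T = L_0, L_1, \dots, L_\ell$ of $V$. Because $L_0 \cup L_1 = N_H[T]$, all edges of $H[N_H[T]]$ are known by hypothesis, and passing to the partial supergraph $H'$ keeps known pairs known, every pair inside $L_0 \cup L_1$ is known in $H'$; together with point (i) of \Cref{lem:alg_layers}, the only pairs that are possibly unknown in $H'$ are pairs $uv$ with $u,v$ in a common layer $L_i$ for some $i \geq 2$. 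Call an \emph{admissible} $G$ a triangle-free supergraph of $H$ consistent with $\tup_k, \otup_k$ with $N_G(T) = N_H(T)$; by point (iii) of \Cref{lem:alg_layers} every admissible $G$ is a supergraph of $H'$, so it suffices to determine, for each such unknown pair, whether it is an edge of $G$ -- with the same answer for every admissible $G$ -- which simultaneously proves uniqueness and drives the algorithm.

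So fix $i \geq 2$ and $u, v \in L_i$. Since $i \geq 2$, all pairs between $L_i$ and $L_{i-1}$ are known in $H'$, and all pairs between $L_i$ and $L_j$ with $j \leq i - 2$ are known non-edges. If $u$ and $v$ have a common neighbour $w \in L_{i-1}$ (a condition readable off $H'$), then $uw$ and $vw$ are edges of every admissible $G$, so triangle-freeness forces $uv$ to be a non-edge. Otherwise, fix a neighbour $w_u \in L_{i-1}$ of $u$, which exists by point (ii) of \Cref{lem:alg_layers}; the pair $v w_u$ is known (as $i \geq 2$) and is a non-edge by the case assumption. I would then hang off $w_u$ a connected ``tail'' of known edges producing, with $u$ and $v$, a $k$-set whose connectivity tracks only the pair $uv$.

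Concretely, the case $k = 2$ is immediate since the $k$-sets are exactly the edges, so assume $k \geq 3$. If $k = 3$, put $S = \{w_u\}$. If $k \geq 4$, observe that using only known edges of $H'$ every vertex of $L_0 \cup \dots \cup L_{i-2}$ can be joined down to $T$ (by point (ii) of \Cref{lem:alg_layers}), so these vertices span, through known edges, a connected graph on at least $|T| \geq k - 1$ vertices; inside it choose a connected set $S'$ of size $k - 3$ containing a known neighbour of $w_u$ in $L_{i-2}$ (such a neighbour exists by point (ii), or because $w_u \in L_1 = N_H(T)$ when $i = 2$), and set $S = \{w_u\} \cup S'$. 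In all cases $S$ is connected through known edges, $|S \cup \{u,v\}| = k$, and for every admissible $G$ the set $S \cup \{u\}$ is connected in $G$ while $v$ has no neighbour in $S$ in $G$: the neighbours of $v$ lie in $L_{i-1} \cup L_i \cup L_{i+1}$, the pair $v w_u$ is a known non-edge, and the remaining vertices of $S$ lie in $L_0 \cup \dots \cup L_{i-2}$, whose pairs with $L_i$ are known non-edges. Therefore $G[S \cup \{u,v\}]$ is connected if and only if $uv \in E(G)$, and since $G$ is consistent with $\tup_k, \otup_k$ this is equivalent to $S \cup \{u,v\} \in \tup_k$ -- a single lookup that decides the pair for every admissible $G$ at once.

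Running over all such pairs pins down every pair of $H'$, hence all of $E(G)$, so there is at most one admissible $G$. The algorithm carries out these steps -- the layering costs at most $|V|^2$ queries by \Cref{lem:alg_layers}, and each same-layer pair costs at most one more -- to build the unique candidate $G^\star$, and then verifies in polynomial time (for fixed $k$) that $G^\star$ is triangle-free, that $N_{G^\star}(T) = N_H(T)$ (automatic, since $G^\star$ is a supergraph of $H'$, whose pairs between $T$ and $V - N_H[T]$ are known non-edges), and that $G^\star$ is consistent with $\tup_k, \otup_k$; it outputs $G^\star$ if all checks pass, and reports that no admissible graph exists otherwise (correctly, since any admissible $G$ would equal $G^\star$). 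The main obstacle is the gadget of the third paragraph: one has to isolate a single unknown pair $uv$ inside a $k$-set assembled only from parts of the graph already determined. Its two delicate points are the case in which $u$ and $v$ share a neighbour in the layer below -- exactly where triangle-freeness is indispensable, as otherwise that pair would be genuinely ambiguous -- and forcing the gadget to have size precisely $k$, which is where the hypotheses $|T| \geq k - 1$ and the connectivity of the lower layers enter.
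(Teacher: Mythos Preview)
Your proof is correct and follows essentially the same approach as the paper: apply \Cref{lem:alg_layers}, observe that only same-layer pairs in $L_i$ with $i\geq 2$ remain unknown, dispatch pairs with a common neighbour in $L_{i-1}$ by triangle-freeness, and otherwise decide the pair via a single $k$-set built from one neighbour in $L_{i-1}$ plus a connected $(k-3)$-tail in the lower layers. Your write-up is in fact somewhat more careful than the paper's, handling the $k\in\{2,3\}$ boundary cases explicitly and spelling out the final verification step.
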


\begin{proof}
    Let $G$ denote any supergraph of $H$ consistent with $\tup_k, \otup_k$ such that $N_G(T) = N_H(T)$.
    
    Using \Cref{lem:alg_layers}, we get a partial graph $H'$ and partition $(L_i)^{\ell}_{i=0}$ of $V$, where $G$ is a supergraph of $H'$. Moreover, for every unknown edge $vw$ of $H'$, we have $v, w \in L_i$ for some $2 \leq i \leq \ell$.
    
    Fix any $2 \leq i \leq \ell$ and a pair of vertices $v, w \in L_i$. If $v, w$ have a common neighbour in $L_{i - 1}$ in $H'$, then $vw \not\in E(G)$. Otherwise, pick any $x_v \in L_{i - 1} \cup N_{H'}(v)$ and any $(k-3)$-subset $Z$ of $\bigcup_{j=0}^{i - 2} L_j$ connected and adjacent to $x_v$ in $H'$. We look at the $k$-set consisting of $v, w, x_v$ and $Z$. $v, x_v$ and $Z$ are connected in $G$ and $w$ is non-adjacent to both $x_v$ and $Z$, hence such $k$-set is connected if and only if $vw \in E(G)$.

    This proves that the remaining edges of any supergraph of $H$ which satisfies the assumptions are uniquely determined. \qed
\end{proof}

For triangle-free graphs, we will choose this $T$ to be the closed neighborhood of a vertex $u$ with large enough degree when it exists since what follows show that we can easily determine its neighborhood and vertices at distance 2 (which would correspond to $N[T]$) just by knowing a small part of $N(u)$.

\begin{proposition}\label{l:k3free_kernel}
    Fix $k \geq 2$. Let $\tup_k, \otup_k$ be a complete set of $k$-sets on $V$. Given vertices $u, x_1, x_2, \dots, x_{2k - 4}$, we can find subsets $X, Y \subseteq V$ and a partial graph $H$ on $V$ such that:
    \begin{enumerate}
        \item $X = N_H(u)$ and $Y = N_H(\{u\} \cup X)$,
        \item for every $s \in \{u\} \cup X \cup Y$ and $t \in V$, $st \notin E_U(H)$,
        \item and for every triangle-free graph $G$ consistent with $\tup_k, \otup_k$ such that\\ $x_1, \dots, x_{2k - 4} \in N_G(u)$, $G$ is a supergraph of $H$.
    \end{enumerate}
\end{proposition}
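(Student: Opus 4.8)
The plan is to proceed in two phases: first recover $X:=N_G(u)$ exactly, using the $2k-4$ given neighbours of $u$ essentially, and then recover $Y:=N_G(\{u\}\cup X)$ together with \emph{every} edge incident to $\{u\}\cup X\cup Y$. The case $k=2$ is immediate, since then $\tup_k,\otup_k$ is literally the edge relation; so assume $k\ge 3$, and note $2k-4\ge k-1\ge 2$. Each decision below is read off from $\tup_k,\otup_k$, and we will argue it correctly determines the status of the edge in question for every triangle-free graph $G$ that is consistent with $\tup_k,\otup_k$ and has $x_1,\dots,x_{2k-4}\in N_G(u)$; since all such $G$ share the same $\tup_k,\otup_k$, each of them is then a supergraph of the resulting $H$ (and if none exists, condition~(iii) is vacuous). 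We repeatedly use that, as $u$ is adjacent to all of $x_1,\dots,x_{2k-4}$, triangle-freeness forces $\{x_1,\dots,x_{2k-4}\}$ to be independent; more generally: (1) for $A\subseteq\{x_1,\dots,x_{2k-4}\}$ and $v\notin A$, the set $\{v\}\cup A$ is connected iff $A\subseteq N_G(v)$ (since $v$ is the only possible connector); and (2) if $u$ and a vertex $z$ have a common neighbour, then $uz\notin E(G)$.

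\emph{Phase 1: recovering $X=N_G(u)$.} Fix $v\notin\{u,x_1,\dots,x_{2k-4}\}$. We claim $v\in N_G(u)$ if and only if \emph{(a)} $\{u,v\}\cup A\in\tup_k$ for every $(k-2)$-subset $A$ of $\{x_1,\dots,x_{2k-4}\}$, and \emph{(b)} $\{v\}\cup A\in\otup_k$ for every $(k-1)$-subset $A$ of $\{x_1,\dots,x_{2k-4}\}$. If $uv\in E(G)$, then by triangle-freeness $v$ is non-adjacent to every $x_i$, so each set in (a) is a star centred at $u$ and each set in (b) is edgeless; this is the forward direction. Conversely suppose $uv\notin E(G)$: if $v$ has at most $k-2$ neighbours among $x_1,\dots,x_{2k-4}$, then at least $(2k-4)-(k-2)=k-2$ of the $x_i$ avoid $N_G(v)$, and taking $A$ to be $k-2$ of them, $\{u,v\}\cup A$ splits into $\{v\}$ and the star $\{u\}\cup A$, contradicting (a); and if $v$ has at least $k-1$ such neighbours, then a $(k-1)$-subset $A$ of them makes $\{v\}\cup A$ connected by (1), contradicting (b). This is exactly where the count $2k-4$ is tight. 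Let $X$ consist of $x_1,\dots,x_{2k-4}$ together with every $v$ passing (a) and (b), and record $u$ adjacent to precisely $X$ and $X$ independent; then $X=N_H(u)$, and $X=N_G(u)$ for every relevant $G$.

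\emph{Phase 2: recovering $Y$ and all edges incident to $\{u\}\cup X\cup Y$.} This is the main part of the work. For each $w\in V\setminus(\{u\}\cup X)$ we recover $N_G(w)\cap X$ (note $wu\notin E(G)$): if $\{w\}\cup A\in\tup_k$ for some $(k-1)$-subset $A$ of $X$, then by (1) $N_G(w)\cap X$ is the union of all $(k-1)$-subsets $A$ of $X$ with $\{w\}\cup A\in\tup_k$; otherwise $|N_G(w)\cap X|\le k-2$, and for each $x\in X$ we have $wx\in E(G)$ iff $\{w,u,x\}\cup A'\in\tup_k$ for every $(k-3)$-subset $A'$ of $X\setminus\{x\}$ --- indeed, since $u$ is adjacent to $x$ and to all of $A'$ and $wu\notin E(G)$, this set is connected iff $w$ is adjacent to $x$ or to some vertex of $A'$; if $wx\in E(G)$ it always is, and if $wx\notin E(G)$, then as $|N_G(w)\cap X|\le k-2$ and $|X\setminus\{x\}|\ge 2k-5$, at least $k-3$ vertices of $X\setminus\{x\}$ avoid $N_G(w)$, so a suitable $A'$ witnesses disconnection. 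Doing this for all $w$ records every edge between $X$ and $V\setminus(\{u\}\cup X)$; set $Y:=\{w : N_G(w)\cap X\ne\varnothing\}$, so $Y=N_H(\{u\}\cup X)$. If $Y=\varnothing$ we are done. Otherwise all edges between $T:=\{u\}\cup X$ and $V\setminus T$ are now known and $N_H(T)=Y\ne\varnothing$, so \Cref{lem:alg_layer_single} applied to $T$ (which is connected with $|T|\ge 2k-3$) records every edge between $Y$ and $V\setminus(T\cup Y)$. Finally, for $y,y'\in Y$: if $(N_G(y)\cap X)\cap(N_G(y')\cap X)\ne\varnothing$ then $yy'\notin E(G)$ by (2); otherwise, with $|N_G(y)\cap X|\ge|N_G(y')\cap X|$ WLOG, if $|N_G(y)\cap X|\ge k-2$ we pick $z_1,\dots,z_{k-2}\in N_G(y)\cap X$ (these avoid $N_G(y')$) and $\{y,y',z_1,\dots,z_{k-2}\}$ is connected iff $yy'\in E(G)$, whereas if $|N_G(y)\cap X|\le k-3$ we pick $x\in N_G(y)\cap X$ and a connected $(k-2)$-subset $B\ni x$ of $\{u\}\cup X$ with $B\setminus\{x\}$ disjoint from $N_G(y')$ (possible since $|X|\ge 2k-4$), so that $\{y,y'\}\cup B$ is connected iff $yy'\in E(G)$. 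This records all remaining edges incident to $Y$; for fixed $k$ everything above uses $O(|V|^{k})$ queries and polynomial time.

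\emph{Main obstacle.} The delicate part is Phase 2, in particular the exact recovery of $N_G(w)\cap X$ for every $w$: since a non-neighbour of $u$ may be adjacent to many vertices of $X$, one needs the ``group test'' $\{w\}\cup A$ for the case of at least $k-1$ neighbours in $X$ and the more fragile $(k-3)$-extension test $\{w,u,x\}\cup A'$ for the remaining case, and in both phases it is precisely the bound $|X|\ge 2k-4$ (guaranteed by the given $x_i$) that makes the avoidance counts go through. One also has to check the degenerate cases ($k=3$, where $B=\{x\}$ and the $z$'s degenerate correctly; $Y=\varnothing$; $V=\{u\}\cup X$).
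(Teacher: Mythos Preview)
Your proof is correct and follows essentially the same two-phase strategy as the paper: recover $X=N_G(u)$ from the $2k-4$ given neighbours via the same pair of tests, then determine $N_G(w)\cap X$ for each $w$, and finally resolve the $Y$--$Y$ edges using the common-neighbour/triangle-free argument. The differences are minor variants (you test each $x\in X$ individually with $(k-3)$-extensions where the paper reads off all non-neighbours at once from the disconnected $(k-2)$-tests, and you split the $Y$--$Y$ case in two where the paper uses a single test). One point where your write-up is actually more complete: you explicitly invoke \Cref{lem:alg_layer_single} to determine the edges from $Y$ to $V\setminus(\{u\}\cup X\cup Y)$, which condition~(ii) of the statement requires but the paper's own proof does not spell out.
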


\begin{proof}
    Let $G$ denote any graph satisfying the assumptions in (iii). We will construct $H$ step by step and prove that all added edges and non-edges are consistent with $G$.

    Let $Z = \{x_1, \dots, x_{2k - 4}\}$. Fix $v \notin \{u\} \cup Z$.
    We query a $k$-set $s_{v, Z'} = \{v, u\} \cup Z'$ for every $(k-2)$-subset $Z'$ of $Z$. We query a $k$-set $t_{v, Z''} = \{v, u\} \cup Z''$ for every $(k-1)$-subset $Z''$ of $Z$.
    If $v \in N_G(u)$, all $s_{v, Z'}$ are connected. Additionally, since $G$ is triangle-free, $N_G(u)$ is an independent set and all $t_{v, Z''}$ are disconnected.
    On the other hand, if $v \not\in N_G(u)$, then either $v$ has some $k - 1$ neighbours in $Z$ and the corresponding $t_{v, Z''}$ is connected or $v$ has some $k - 2$ non-neighbours in $Z$ and the corresponding $s_{v, Z'}$ is disconnected.
    Therefore, we can define $X$ to be $Z$ plus all such $v$ for which all $s_{v, Z'}$ are in $\tup_k$ and all $t_{v, Z''}$ are in $\otup_k$. For every $v \neq u$, we put $uv \in E(H)$ if $v \in X$, and we put $uv \in E_N(H)$ otherwise.
    
    Fix $v \notin \{u\} \cup X$.
    We query a $k$-set $s_{v, X'} = \{v, u\} \cup X'$ for every $(k-2)$-subset $X'$ of $X$. We query a $k$-set $t_{v, X''} = \{v, u\} \cup X''$ for every $(k-1)$-subset $X''$ of $X$.
    The vertex $v$ belongs to $N_G(\{u\} \cup X)$ if and only if at least one $s_{v, X'}$ is connected, hence we put $Y$ to be the set of all such vertices $v$.
    Again, $v$ has either at least $k - 1$ neighbours in $X$ or $k - 2$ non-neighbours in $X$. In the first case, the union of $t_{v, X''} \in \tup_k$ is exactly $\{v\} \cup (N_G(v) \cap X)$. If no $t_{v, X''}$ is in $\tup_k$, then we are in the second case and the union of $s_{v, X'}$ is exactly $\{u, v\} \cup (X - N_G(v))$.
    This means that the edges between $X$ and $V - \{u\} - Y$ can be uniquely determined in $G$. We define them accordingly in $H$.
    
    Finally, fix a pair of vertices $v, w \in Y$. Since $G$ is triangle-free, if $v$ and $w$ have a common neighbour in $X$, then $vw \notin E(G)$. Assume their neighbourhoods in $X$ are disjoint. W.l.o.g. we can assume that $v$ has at least $k - 3$ non-neighbours. Pick an arbitrary neighbour $x_w$ of $w$ in $X$. We query the $k$-set consisting of $u, v, w, x_w$ and some additional $k - 4$ vertices of $X - \{x_w\} - N_G(v)$. Since $v$ is non-adjacent to $u$ and $X - N_G(v)$, such tuple is connected if and only if $vw \in E(G)$. This means that the edges of $G[Y]$ can be uniquely determined in $G$. We define them accordingly in $H$.
    It is easy to see that $X, Y$ and $H$ defined in such way satisfies all the properties.
\end{proof}

\begin{lemma}\label{l:k3free_large_deg_graph}
    Fix $k \geq 2$. Let $\tup_k, \otup_k$ be a complete set of $k$-sets on $V$. Given vertices $u, x_1, x_2, \dots, x_{2k - 4}$, there exists at most one triangle-free graph $G$ consistent with $\tup_k, \otup_k$ such that $\{x_1, \dots, x_{2k-4}\} \in N_G(u)$. Moreover, we can find this graph or determine it doesn't exist in polynomial time.
\end{lemma}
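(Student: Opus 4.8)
The plan is to chain \Cref{l:k3free_kernel} and \Cref{lem:k3_free_alg_finish}. First I would apply \Cref{l:k3free_kernel} to the given vertices $u, x_1, \dots, x_{2k-4}$ to obtain sets $X, Y$ and a partial graph $H$ with $X = N_H(u)$, $Y = N_H(\{u\} \cup X)$, every edge incident to a vertex of $\{u\} \cup X \cup Y$ known in $H$, and every triangle-free $G$ consistent with $\tup_k, \otup_k$ having $\{x_1, \dots, x_{2k-4}\} \subseteq N_G(u)$ being a supergraph of $H$. The natural move is then to set $T := \{u\} \cup X = N_H[u]$ and invoke \Cref{lem:k3_free_alg_finish} with this $H$ and $T$.

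Before invoking it I would check its hypotheses. The set $T = \{u\} \cup X$ is connected in $H$ since $u$ is adjacent in $H$ to every vertex of $X$; it satisfies $|T| = |X| + 1 \geq 2k - 3 \geq k-1$ for every $k \geq 2$; we have $N_H(T) = N_H(\{u\} \cup X) = Y$ by property (i) of \Cref{l:k3free_kernel}; and $N_H[T] = \{u\} \cup X \cup Y$, so by property (ii) all edges of $H[N_H[T]]$ are known. The only hypothesis that can fail is $N_H(T) = Y \neq \emptyset$, so I would dispose of the case $Y = \emptyset$ separately: since all edges incident to $\{u\} \cup X$ are known in $H$, any consistent supergraph $G$ has $N_G(\{u\} \cup X) = Y = \emptyset$, so $\{u\} \cup X$ is a union of connected components of $G$; as $G$ is connected (our standing assumption), $V = \{u\} \cup X$ and then $G = (V, E(H))$ is already fully determined, so we just output it if it is triangle-free and consistent with $\tup_k, \otup_k$ (checkable in polynomial time) and report non-existence otherwise.

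When $Y \neq \emptyset$, \Cref{lem:k3_free_alg_finish} yields at most one triangle-free supergraph $G$ of $H$ consistent with $\tup_k, \otup_k$ with $N_G(T) = N_H(T)$, together with a polynomial-time procedure to find it or certify it does not exist. To conclude, I would observe that every triangle-free $G$ consistent with $\tup_k, \otup_k$ with $\{x_1, \dots, x_{2k-4}\} \subseteq N_G(u)$ falls under this count: it is a supergraph of $H$ by property (iii) of \Cref{l:k3free_kernel}, and since all edges incident to $T$ are known in $H$ we get $N_G(T) = N_H(T)$. Hence there is at most one such graph, and running the procedure of \Cref{lem:k3_free_alg_finish} returns it or certifies its non-existence; any graph it returns automatically contains $\{x_1, \dots, x_{2k-4}\} \subseteq X = N_H(u) \subseteq N_G(u)$, so no extra filtering is needed.

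The argument is largely bookkeeping once the two preceding lemmas are in hand; the only points needing real care are (a) confronting the size of $X$ with the requirement $|T| \geq k-1$ of \Cref{lem:alg_layer_single}, which comes down to the computation $|X| + 1 \geq 2k - 3 \geq k - 1$, and (b) the degenerate case $Y = \emptyset$, where the layering machinery underlying \Cref{lem:k3_free_alg_finish} does not trigger and one must instead argue directly from the connectivity of $G$ that $H$ has no unknown edges.
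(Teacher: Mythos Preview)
Your proposal is correct and follows essentially the same route as the paper: apply \Cref{l:k3free_kernel} to obtain $X,Y,H$, set $T=\{u\}\cup X$, verify the hypotheses of \Cref{lem:k3_free_alg_finish}, and invoke it. The only difference is that you treat the degenerate case $Y=\emptyset$ explicitly, whereas the paper simply asserts that $N_H(T)=Y$ is non-empty (implicitly relying on the standing assumption that the target graph is connected and large); your extra care here is harmless and arguably cleaner.
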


\begin{proof}
     Let $G$ denote any triangle-free graph consistent with $\tup_k, \otup_k$ such that\\ $x_1, \dots, x_{2k-4} \in N_G(u)$.

     Applying \Cref{l:k3free_kernel}, 
     we obtain a partial graph $H$ and subsets $X, Y \subseteq V$ such that $G$ is a supergraph of $H$, $N_G(u) = N_H(u) = X$ and $N_G(\{u\} \cup X) = Y$. Moreover, all edges of $H[\{u\} \cup X \cup Y]$ are known.
    
     Let $T = \{u\} \cup X$. We will show that the assumptions of \Cref{lem:k3_free_alg_finish} are satisfied by $H$ and $T$. Clearly, $T$ is connected in $H$ and of size at least $2k - 3 \geq k - 1$ and $N_H(T) = Y$ is non-empty. Finally, since $\{u\} \cup X \cup Y = T \cup N_H(T)$, all edges of $H[T \cup N_H(T)]$ are known. This allows us to use \Cref{lem:k3_free_alg_finish}, which finishes the proof.
 \end{proof}

Finally, we assemble the different components of our algorithm to enumerate every connected triangle-free graph that is consistent with $\tup_k$ and $\otup_k$. Moreover, as we will see in \Cref{sec:unique-triangle-free}, for large enough graphs, only one such graph can exist.

\begin{theorem}\label{thm:alg-triangle-free}
    Let $k$ be an integer with $k \geq 2$. For a complete set of $k$-sets $\tup_k,\otup_k$ on $V$, we can enumerate every connected triangle-free graph on $V$ consistent with $\tup_k,\otup_k$ in polynomial time.
\end{theorem}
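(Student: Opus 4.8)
The plan is to split on whether the target graph has a vertex of large degree: every connected triangle-free graph $G$ consistent with $\tup_k,\otup_k$ either has a vertex of degree at least $2k-4$, or has maximum degree at most $2k-5$. For each of the two cases I will build a list of candidate graphs that provably contains all consistent connected triangle-free graphs falling in that case, then in a single final pass throw away every candidate $G'$ that is disconnected or has $\tup_k(G')\ne\tup_k$ or $\otup_k(G')\ne\otup_k$, and output what remains. Each such test costs $\Oh(|V|^k)$ time, and, as will be clear, each case produces only polynomially many candidates, so this yields a polynomial-time enumeration. (For $k=2$ the first case already covers everything and $\tup_2$ is exactly the edge set, so there is nothing to do.)

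\textbf{Case 1: a vertex of degree at least $2k-4$.} Suppose $G$ has a vertex $u$ with $\deg_G(u)\ge 2k-4$ and fix $2k-4$ of its neighbours $x_1,\dots,x_{2k-4}$. Then it suffices to loop over all at most $|V|^{2k-3}$ tuples of distinct vertices $(u,x_1,\dots,x_{2k-4})$ and, for each, call \Cref{l:k3free_large_deg_graph}: in polynomial time it returns the unique triangle-free graph consistent with $\tup_k,\otup_k$ having $\{x_1,\dots,x_{2k-4}\}\subseteq N(u)$, if one exists. Every consistent connected triangle-free graph with a vertex of degree at least $2k-4$ is returned by at least one of these calls, and the number of candidates obtained is polynomial in $|V|$.

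\textbf{Case 2: maximum degree at most $2k-5$.} Here $D:=2k-5$ is a constant depending only on $k$ (this case is empty for $k=2$). Since $G$ is connected with $|V|\ge k$, it contains a connected induced subgraph $T_0$ on exactly $k-1$ vertices (take $k-1$ vertices spanning a subtree of a spanning tree of $G$); then $T_0\ne V$, so $N_G(T_0)\ne\emptyset$, and $|N_G[T_0]|\le (k-1)(1+D)=:c(k)$ by the degree bound. I will loop over all pairs $(S,G_S)$ with $S\subseteq V$, $|S|\le c(k)$, and $G_S$ an arbitrary graph on $S$; for each, pick a connected $(k-1)$-vertex set $T\subsetneq S$ with $N_{G_S}[T]=S$ (skipping the pair if none exists), form the partial graph $H$ on $V$ whose known edges and known non-edges inside $S$ are those of $G_S$, whose pairs between $T$ and $V-S$ are non-edges, and whose remaining pairs are unknown, and call \Cref{lem:k3_free_alg_finish} on $H$ and $T$. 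Its hypotheses hold: $T$ is connected in $H$ with $|T|=k-1$, and $N_H(T)=N_{G_S}(T)=S-T$ is non-empty with all edges of $H[N_H[T]]=H[S]$ known. For the pair $S=N_G[T_0]$, $G_S=G[S]$ one has $N_{G_S}[T_0]=S$ and $N_G(T_0)=N_H(T_0)$, while $G$ is a supergraph of this $H$ (since $N_G(T_0)\subseteq S$), so $G$ is the graph returned by that call. Thus every consistent connected triangle-free graph of maximum degree at most $2k-5$ appears among the candidates, whose number is polynomial in $|V|$ because $c(k)$ is a constant.

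\textbf{What requires care.} Most of the substance is already packaged in \Cref{l:k3free_kernel,l:k3free_large_deg_graph,lem:k3_free_alg_finish}, so the only delicate choice here is that of the core $T$ in Case 2: it must be connected, of size at least $k-1$, have a non-empty closed neighbourhood whose edges are all part of the guess, and yet have $N[T]$ small enough that only polynomially many pairs $(S,G_S)$ need to be tried. Taking $T$ to be any connected induced $(k-1)$-vertex subgraph accomplishes all of this precisely because $\Delta(G)\le D$ bounds $|N_G[T]|$ by $(k-1)(1+D)$; the small points to verify are that a connected graph on at least $k$ vertices contains such a $T$ and that $N_G(T)\ne\emptyset$ because $T\ne V$. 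Case 1 and the final consistency-and-connectivity filter are routine.
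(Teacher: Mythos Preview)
Your overall strategy mirrors the paper's: split on whether a high-degree vertex exists, invoke \Cref{l:k3free_large_deg_graph} in Case~1, and in Case~2 guess a small connected core together with its closed neighbourhood and feed the result to \Cref{lem:k3_free_alg_finish}. Case~1 and the final filtering step are fine.

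There is, however, a genuine gap in Case~2. You iterate over pairs $(S,G_S)$ and, for each, \emph{pick one} connected $(k-1)$-set $T\subsetneq S$ with $N_{G_S}[T]=S$, then build $H$ by declaring all pairs between $T$ and $V-S$ to be non-edges. Your correctness argument says: for the target $G$, take a connected $(k-1)$-set $T_0$ in $G$, set $S=N_G[T_0]$ and $G_S=G[S]$, and conclude that $G$ is a supergraph of ``this $H$'' with $N_G(T_0)=N_H(T_0)$. But the $H$ the algorithm constructs for $(S,G_S)$ is built from the algorithm's chosen $T$, not from $T_0$. If $T\ne T_0$, nothing prevents $T$ from having a neighbour in $V-S$ in $G$: the condition $N_{G_S}[T]=S$ only constrains the neighbourhood of $T$ \emph{inside} $S$, not $N_G[T]$. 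In that case $G$ is not a supergraph of $H$ (some declared non-edge between $T$ and $V-S$ is an edge of $G$), nor does $N_G(T)=N_H(T)$ hold, so the call to \Cref{lem:k3_free_alg_finish} need not return $G$, and your enumeration may miss it.

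The fix is immediate and keeps the running time polynomial: either iterate additionally over all valid choices of $T$ inside each $(S,G_S)$ (at most $\binom{c(k)}{k-1}$ of them, a constant), or---as the paper does---fix $T$ once and for all as an arbitrary element of $\tup_k$ (this $T$ is then the same set for every consistent $G$), and only iterate over candidate neighbourhoods $N$ of bounded size and candidate graphs on $T\cup N$. With either repair your Case~2 argument goes through, and the proof is then essentially the paper's.
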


\begin{proof}
    Let $G$ denote an arbitrary triangle-free graph consistent with $\tup_k, \otup_k$. We consider two cases: $G$ contains a vertex of degree at least $2k - 4$ or not.
    
    In the first case, we go over all candidates $u \in V$ of vertex of such degree and for every subset $\{x_1, ..., x_{2k-4}\} \subseteq V$ of candidate neighbours of $u$. For every selection of $u$ and $x_i$, we apply \Cref{l:k3free_large_deg_graph} to find a potential solution $G$.

    In the second case, we pick any connected $k$-set $T \in \tup_K(S)$. By our degree assumption, the size of $N_G(T)$ in any solution $G$ is at most $k(2k - 4)$. We go over all candidate sets $N$ of size at most $k(2k - 4)$ for this neighbourhood. For fixed $T$ and $N$, we go over all possible graphs $F$ on $T \cup N$ for which $N_F(T) = N$ (since $|T \cup N| = O(1)$, there is a constant number of such graphs). Now we put $H$ to be a partial graph on $V$ with the edges of $H[T \cup N]$ put as in $F$. Now we use \Cref{lem:k3_free_alg_finish} to find a potential solution $G$.

    We are left with a polynomial-size list of potential solutions. Finally, we filter out all solutions that do not satisfy all tuples $\tup_k, \otup_k$.\qed
\end{proof}

\subsection{Graphs with bounded maximum degree}\label{alg:bounded-degree}

In this subsection, we introduce the notion of a skeleton. A skeleton is a partial graph where the unknown edges are contained in some disjoint small sets. The idea is that using a skeleton we can describe a set of graphs in which the differences between its members only occur in small sets of constant size which do not interact with one another. Formally, a skeleton is defined as follows.

\begin{definition}
    Let a \emph{skeleton} be a partial graph $H$ with a family of disjoint vertex subsets $V_1,V_2,\dots,V_m$ and a family of collections $\cgC_1,\cgC_2,\dots,\cgC_m$ such that $E_U(H)=\bigcup_{i\in [\ell]}\binom{V_i}{2}$ and $\cgC_i$ is a collection of graphs on $V_i$ for every $1\leq i\leq m$. The \emph{width} of a skeleton is $\max_{i\in[m]}|V_i|$. A graph $G$ is a \emph{completion} of a skeleton if there exists graphs $C_1\in \cgC_1,\dots,C_m\in \cgC_m$ such that $G$ is a supergraph of $H$ and $G[V_i]=C_i$ for every $1\leq i\leq m$.
\end{definition}

Using this concept, we can capture the common structure present in connected graphs of bounded maximum degree that share the same $k$-sets even though there might be exponentially many of them.

\begin{theorem}\label{thm:bounded-degree}
    Let $k$ and $d$ be integers such that $k\geq 2$ and $d\geq 1$. For a complete set of $k$-sets $\tup_k,\otup_k$ on $V$, in polynomial time, we can find a constant-size (depending on $k$ and $d$) family $\cgF$ of skeletons of width at most $d$ such that a connected graph of maximum degree $d$ is consistent with $\tup_k,\otup_k$ if and only if it is a completion of a skeleton of $\cgF$.
\end{theorem}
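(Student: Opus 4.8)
\textbf{Proof proposal for \Cref{thm:bounded-degree}.}
The plan is to mirror the structure of the triangle-free algorithm, but replace the "unique completion" arguments by "constantly many local choices" packaged into a skeleton. First I would fix an arbitrary connected $k$-set $T_0\in\tup_k$ and note that, since $G$ has maximum degree $d$, the closed neighborhood $N_G[T_0]$ has size at most $k\cdot(d+1)=O(1)$; so I can afford to guess $N=N_G(T_0)$ and the entire induced graph $F=G[T_0\cup N]$ by brute force over the constantly many candidates. This fixes a partial graph $H$ on $V$ in which $H[T_0\cup N]=F$ and all remaining pairs are unknown, and $H$ satisfies the hypotheses of \Cref{lem:alg_layer_single}, hence of \Cref{lem:alg_layers}: running the Layering lemma with $T=T_0$ produces a partial supergraph $H'$ and a BFS partition $T_0=L_0,L_1,\dots,L_\ell$ of $V$, with the guarantee that every $G$ compatible with our guesses is a supergraph of $H'$ and that the only unknown edges of $H'$ lie inside single layers $L_i$ with $i\ge 2$.

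The core new step is to further resolve the within-layer unknown edges up to constantly many local choices. Fix $i\ge 2$ and $v,w\in L_i$ with $vw\in E_U(H')$. As in \Cref{lem:k3_free_alg_finish}, I would try to certify (non)adjacency of $v,w$ by a single query on a $k$-set $\{v,w,x\}\cup Z$ where $x\in L_{i-1}\cap N_{H'}(v)$ and $Z$ is a connected $(k-3)$-set in $\bigcup_{j<i-1}L_j$ adjacent to $x$; this test decides $vw$ \emph{provided $w$ is non-adjacent to all of $\{x\}\cup Z$}. The obstruction to always applying this is exactly that $v$ and $w$ may share neighbors (a common neighbor in $L_{i-1}$ forces a non-edge in the triangle-free case, but here it does not), or more generally that every valid "probe" set for $v$ overlaps $N_G(w)$. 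But here the bounded degree saves us: the set of vertices in layers $\le i-1$ that $w$ can "see" has size $\le d$, so group together all vertices of $L_i$ that are pairwise "entangled" through such shared neighbors; because each vertex has $\le d$ neighbors, these entanglement classes have size bounded by a function of $d$ (and $k$) only. For a vertex $v$ in a singleton or small class whose probe set can be chosen disjoint from the relevant neighborhoods, the edge is forced; the remaining unresolved unknown edges all lie inside these small classes $V_1,\dots,V_m$, which I take to be the parts of the skeleton. I still need to argue the $V_i$ are pairwise disjoint and that no unknown edge runs between two different $V_i$'s — this follows because unknown edges only occur within a layer and the entanglement relation is defined so that any two vertices joined by an unknown edge land in the same class.

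Finally I would define, for each part $V_i$, the collection $\cgC_i$ to be simply \emph{all} graphs on $V_i$ (there are at most $2^{\binom{d}{2}}=O(1)$ of them); the skeleton is then $(H',\{V_i\},\{\cgC_i\})$, and its width is $\max|V_i|\le d$ by construction. A connected max-degree-$d$ graph consistent with $\tup_k,\otup_k$ and compatible with the initial guesses is, by \Cref{lem:alg_layers} and the probe arguments, exactly a supergraph of $H'$ that agrees with one of the per-part choices, i.e.\ a completion of this skeleton; conversely every completion agrees with $H'$ outside the parts and is therefore consistent on all $k$-sets that are "anchored" outside a single part, and one checks the $k$-sets meeting a single part are handled because all graphs on $V_i$ are allowed and $V_i$ does not interact with the rest — strictly, I would finish by \emph{filtering} $\cgF$: enumerate all guesses $(T_0,N,F)$, build the skeleton for each, and discard any skeleton some/every completion of which is inconsistent, which is checkable in polynomial time since consistency of a completion depends only on the $O(1)$-size parts and the fixed $H'$. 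The main obstacle I anticipate is precisely the bound on the entanglement-class size and the proof that no unknown edge crosses between classes: getting a clean definition of the classes so that (a) they are small, (b) unknown edges stay inside them, and (c) edges outside them are genuinely forced by a single query, is where the bounded-degree hypothesis must be used carefully, and it is the step most likely to hide subtleties (e.g. interactions across several layers, or a vertex whose every probe set is blocked). I would also need the analogue of \Cref{l:k3free_kernel}'s low-degree-vs-high-degree dichotomy only implicitly here, since bounded degree makes every neighborhood already $O(1)$, so the single case $T_0\in\tup_k$ suffices.
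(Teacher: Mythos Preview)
Your outline has the right skeleton (pun intended) but two genuine gaps stand between it and a proof, and they are precisely the places you flag as ``the main obstacle''.

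First, the definition of the parts and their size bound. Your ``entanglement classes'' --- vertices of $L_i$ grouped by sharing neighbours in $L_{i-1}$ --- do not in general have size bounded by a function of $d$: if $L_{i-1}=\{a_1,\dots,a_n\}$, $L_i=\{b_1,\dots,b_n\}$ and $a_j$ is adjacent to $b_j,b_{j+1}$, then the transitive closure of ``share a neighbour'' makes all of $L_i$ one class of size $n$. The paper avoids this by taking the $V_j$ to be maximal subsets of a layer with \emph{identical} neighbourhood in the previous layer; then $|V_j|\le d$ trivially (a common neighbour has degree $\le d$), the $V_j$ are disjoint by maximality, and your probe argument shows every within-layer unknown edge not inside some $V_j$ is decided by a single query. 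This is the clean definition you were looking for.

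Second, and more seriously, your converse direction breaks. Taking $\cgC_i$ to be \emph{all} graphs on $V_i$ cannot give the ``if and only if'' of the theorem: plenty of completions will violate some $S\in\tup_k\cup\otup_k$ that meets $V_i$. Your fix --- ``filter $\cgF$ by discarding any skeleton some/every completion of which is inconsistent'' --- is both ambiguous and insufficient: filtering at the skeleton level cannot repair a skeleton whose completions are a mixture of consistent and inconsistent graphs, and that mixture is exactly what happens here. The paper instead filters each $\cgC_j$ separately, keeping only those graphs on $V_j$ that satisfy every $k$-set intersecting $V_j$ (with unknown edges outside $V_j$ set pessimistically). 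For this local filtering to guarantee global consistency one needs a non-obvious structural fact: for every $k$-set $S$, all unknown edges of $H^*[S]$ that can actually affect the connectivity of $S$ lie in a \emph{single} $V_j$. The paper formalises this via the notion of an \emph{important} unknown edge for $S$ (\Cref{prop:unimportant}) and proves the claim only after an additional cleanup pass (their Step~3) that decides any unknown edge $uv\subseteq V_j$ for which one endpoint sits in a connected known $(k-1)$-subgraph avoiding the other endpoint's known neighbourhood. Without this extra step and the ensuing claim, a $k$-set can straddle two parts with important unknown edges in both, coupling the choices in $\cgC_i$ and $\cgC_j$ and destroying the product structure the skeleton is supposed to encode.
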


This implies that we can enumerate every connected graph of maximum degree $d$ that is consistent with our given $k$-sets in a reasonable time. Moreover, the difference between these graphs are very local since any combination of graphs in $\cgC_1,\dots,\cgC_m$ will produce a desired consistent graph.

\begin{corollary}
    Let $k$ and $d$ be integers such that $k\geq 2$ and $d\geq 1$. For a complete set of $k$-sets $S_k,\oS_k$ on $V$, we can enumerate every connected graph of maximum degree $d$ consistent with $S_k,\oS_k$ in polynomial time in $|V|$ and the size of the output.
\end{corollary}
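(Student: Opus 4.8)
The plan is to derive the corollary directly from \Cref{thm:bounded-degree} by turning the structural description (a constant-size family $\cgF$ of skeletons of width at most $d$) into an explicit enumeration procedure. First I would invoke \Cref{thm:bounded-degree} to obtain, in polynomial time, the family $\cgF = \{H^{(1)}, \dots, H^{(t)}\}$ of skeletons, where $t$ is bounded by a constant depending only on $k$ and $d$, and each skeleton $H^{(j)}$ comes with its disjoint sets $V^{(j)}_1, \dots, V^{(j)}_{m_j}$ and collections $\cgC^{(j)}_1, \dots, \cgC^{(j)}_{m_j}$ of graphs on those sets, with every $|V^{(j)}_i| \le d$. Since a connected graph of maximum degree $d$ is consistent with $\tup_k, \otup_k$ exactly when it is a completion of some $H^{(j)}$, it suffices to enumerate, for each $j$, all completions of $H^{(j)}$.

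Next I would enumerate the completions of a single skeleton $H^{(j)}$: iterate over all tuples $(C_1, \dots, C_{m_j}) \in \cgC^{(j)}_1 \times \dots \times \cgC^{(j)}_{m_j}$, and for each such tuple output the unique graph $G$ that is a supergraph of $H^{(j)}$ with $G[V^{(j)}_i] = C_i$ for all $i$. Each $\cgC^{(j)}_i$ is a set of graphs on a vertex set of size at most $d$, hence $|\cgC^{(j)}_i| \le 2^{\binom{d}{2}}$, a constant; however $m_j$ can be $\Theta(|V|)$, so the product is potentially exponential in $|V|$ — but this is precisely the number of completions of $H^{(j)}$, which is bounded by the size of the output. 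To keep the running time polynomial in $|V|$ and the output size, I would not materialize the whole product but instead emit the completions one at a time with polynomial delay: fix an order on the indices $i$, and use a standard lexicographic/odometer traversal of the Cartesian product, so that moving from one completion to the next requires updating only a bounded number of the blocks $V^{(j)}_i$, which costs $\mathcal{O}(|V|^2)$ time per completion (to write down the graph) plus $\mathcal{O}(d^2)$ per changed block. Summing over the constant number of skeletons $j$, the total time is polynomial in $|V|$ and the number of graphs produced.

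A minor point to address is that different skeletons of $\cgF$ might share completions, so a naive concatenation could output the same consistent graph more than once. This does not affect "output-polynomial" time if we allow repetitions, but to enumerate each consistent graph exactly once I would either (a) observe that the running time is still polynomial in $|V|$ and the true output size because $t$ is constant, so each graph is produced at most $t$ times — a constant blow-up — or (b) deduplicate on the fly, for instance by only emitting a completion $G$ of $H^{(j)}$ if $j$ is the smallest index for which $G$ is a completion of $H^{(j)}$, which can be checked in $\mathcal{O}(|V|^2 \cdot t)$ time per graph by verifying, for each $j' < j$, whether $G$ is a supergraph of $H^{(j')}$ with every $G[V^{(j')}_i] \in \cgC^{(j')}_i$.

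The main obstacle — and it is a mild one — is simply bookkeeping the enumeration with good delay rather than building an exponential-size list: one has to argue that traversing the product $\prod_i \cgC^{(j)}_i$ incrementally touches only boundedly many blocks per step and that rewriting the affected part of $G$ is cheap. Everything substantive has already been done in \Cref{thm:bounded-degree}; the corollary is essentially the statement that "a polynomial-time-computable, constant-size union of products of constant-size sets can be enumerated in output-polynomial time," so the proof is short and the only care needed is the standard one about avoiding the explicit Cartesian product and, optionally, handling duplicates across the constantly many skeletons.
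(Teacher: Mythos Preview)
Your proposal is correct and follows exactly the approach the paper intends: the paper states the corollary immediately after \Cref{thm:bounded-degree} without a separate proof, treating it as a direct consequence of the skeleton description, and your argument is precisely the natural way to make that step explicit. Your extra care about polynomial delay and deduplication across the constantly many skeletons is sound but more detailed than anything the paper spells out.
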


Before proving the main result of this section, we introduce a notion of importance for unknown edges of a $k$-set $S$ in the current partial graph $H$ which are edges whose presence can influence the connectivity of every supergraph of $H[S]$.

\begin{definition}[Importance of an unknown edge]
Let $S$ be a $k$-set on $V$, let $H$ be a partial graph on $V$, and let $u,v\in S$ such that $uv\in E_U(H)$. We say that \emph{$uv$ is not important for $S$ in $H$} if for every supergraph $G_S$ of $H[S]$, $G_S+uv$ is connected if and only if $G_S-uv$ is connected. Otherwise, we say that \emph{$uv$ is important for $S$ in $H$}.
\end{definition}

The fact that an unknown edge is not important for some $k$-set $S$ in $H$ gives us some specific structures in $H[S]$.

\begin{proposition}\label{prop:unimportant}
Let $S$ be a $k$-set, let $K_S$ be the complete graph on vertices of $S$, let $H$ be a partial graph on the vertices of $S$, and let $u,v\in S$ such that $uv\in E_U(H)$. Then, $(a)$ $uv$ is not important for $S$ in $H$ if and only if 
$(b)$ there exists a path on vertices of $S$ and edges of $E(H)$ between $u$ and $v$ or 
$(c)$ $K_S-E_N(H)$ is disconnected.
\end{proposition}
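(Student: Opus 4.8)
The statement is an equivalence between the semantic condition $(a)$ (the edge $uv$ never changes connectivity of any supergraph of $H[S]$) and the purely combinatorial disjunction ``$(b)$ or $(c)$''. I would prove it as two implications, and it is cleanest to first dispatch the easy direction: $(b)$ or $(c)$ implies $(a)$. If $(b)$ holds, then every supergraph $G_S$ of $H[S]$ already contains an $E(H)$-path from $u$ to $v$, so $u$ and $v$ lie in the same component of $G_S - uv$; adding or deleting $uv$ therefore cannot merge or split components, and $G_S+uv$ is connected iff $G_S-uv$ is. If $(c)$ holds, then $K_S - E_N(H)$ is disconnected, and since every supergraph $G_S$ of $H[S]$ is a spanning subgraph of $K_S - E_N(H)$ (its edge set avoids $E_N(H)$), $G_S$ itself is disconnected; moreover the component structure of $K_S - E_N(H)$ is unaffected by whether $uv$ is present, so $u,v$ are separated by a partition of $S$ into at least two sides with no $E_N(H)$-free edge across it — in particular no $G_S$, with or without $uv$, can be connected, so again the biconditional in $(a)$ holds vacuously.

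For the contrapositive of the other direction, suppose both $(b)$ and $(c)$ fail: there is no $E(H)$-path between $u$ and $v$, and $K_S - E_N(H)$ is connected. The plan is to exhibit a single supergraph $G_S$ of $H[S]$ that witnesses importance, i.e.\ for which exactly one of $G_S+uv$, $G_S-uv$ is connected. The natural candidate is to take $G_S$ to be obtained from $H[S]$ by turning every unknown edge \emph{except} $uv$ into an edge — that is, $G_S = (K_S - E_N(H)) - uv$, augmented so that $uv$ is the unknown edge still in play; then $G_S + uv = K_S - E_N(H)$, which is connected by the failure of $(c)$. It remains to show $G_S - uv$ is disconnected. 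Here is where I expect the main work: I need to argue that removing $uv$ from the connected graph $K_S - E_N(H)$ actually disconnects it — i.e.\ that $uv$ is a bridge of $K_S - E_N(H)$. This is not automatic, so the candidate may need adjustment: instead of making \emph{all} other unknown edges present, I would choose $G_S$ more carefully. Since $(b)$ fails, $u$ and $v$ are in different components $A \ni u$, $B \ni v$ of the graph $(V(S), E(H))$. Build $G_S$ by adding to $E(H)$ exactly the unknown edges needed to make $A$-side and $B$-side each a connected ``blob'' but adding \emph{no} unknown edge between $A\cup(\text{its reachable part})$ and $B$ other than potentially $uv$; concretely, contract each of $A$ and $B$ and work in $K_S - E_N(H)$ with these contractions — connectivity of $K_S - E_N(H)$ guarantees there is \emph{some} $E_N(H)$-free edge set joining the pieces, and we include just enough of it off $uv$, or if $uv$ is forced as the unique such connection we are already done.

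The cleanest way to make the previous paragraph rigorous, and the step I would lead with in the actual write-up, is the following reformulation: let $G_S$ be \emph{any} inclusion-maximal supergraph of $H[S]$ in which $u$ and $v$ are in distinct components — such a $G_S$ exists because $(b)$ fails (so $H[S]$ itself has $u,v$ separated) and the family of $u$-$v$-separated supergraphs is nonempty and finite. By maximality, $G_S - uv = G_S$ is disconnected (in particular $u \not\sim v$), while I claim $G_S + uv$ \emph{is} connected: if it were not, then since $G_S$ had exactly the components of $G_S$, adding $uv$ merges the $u$-component and the $v$-component and leaves any third component untouched, so $G_S + uv$ disconnected would mean $G_S$ has a component $C$ disjoint from $\{u,v\}$; but then, using connectivity of $K_S - E_N(H)$, there is an $E_N(H)$-free edge $e$ from $C$ to the rest that is an unknown edge of $H$ and whose addition keeps $u,v$ separated (it does not touch $u$ or $v$), contradicting maximality of $G_S$. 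Hence $G_S$ witnesses that $uv$ is important, completing the contrapositive. The one genuinely delicate point — and the main obstacle — is this last maximality argument: I must be careful that adding $e$ to reconnect component $C$ really keeps $u$ and $v$ in distinct components, which needs that $e$ can be chosen not to bridge the $u$-side to the $v$-side; this follows by taking $e$ on a shortest path in $K_S - E_N(H)$ from $C$ to the nearest component and observing the first edge of such a path leaves $C$ for a component that is not simultaneously forced to be the other of $\{u\text{-comp}, v\text{-comp}\}$ — if it were, one re-routes, which is possible precisely because $K_S - E_N(H)$ is connected and $C$ is a separate component, so $C$ has $E_N(H)$-free edges to at least two other components or a chain of them. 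I would isolate this as a short lemma on connected graphs (``a vertex subset inducing a union of $\geq 3$ components in a spanning subgraph can be reconnected without bridging a prescribed pair of those components'') to keep the main proof clean.
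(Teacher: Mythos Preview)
Your easy direction matches the paper's. For the hard direction you take a genuinely different route: the paper starts from the connected graph $K_S-E_N(H)$ and iteratively deletes unknown edges lying on $u$--$v$ paths until the graph becomes disconnected, then uses the last deleted edge $wx$ (which lay on a $u$--$v$ path at the moment of deletion) to argue that $G_S+uv$ is connected via an explicit path-replacement. You instead take an edge-maximal supergraph $G_S$ of $H[S]$ with $u$ and $v$ in distinct components and derive connectivity of $G_S+uv$ from maximality. Both arguments produce the same kind of witness; yours avoids the bookkeeping of the deletion process and is arguably cleaner.

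That said, the ``genuinely delicate point'' you flag at the end is a non-issue, and the auxiliary lemma you propose is unnecessary. Your parenthetical justification ``(it does not touch $u$ or $v$)'' is both slightly inaccurate (the edge $e$ may well have $u$ or $v$ as its other endpoint) and beside the point. The correct one-line justification is this: any $E_N(H)$-free edge $e$ leaving the third component $C$ has one endpoint in $C$ and the other in some component $D$ of $G_S$; adding $e$ merges exactly $C$ and $D$ and touches no other component. Since $C$ is neither the $u$-component nor the $v$-component, those two remain distinct in $G_S+e$ regardless of which component $D$ happens to be. This immediately contradicts maximality of $G_S$, so your argument is already complete at that point---no rerouting, no shortest-path selection, and no separate lemma are needed.
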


\begin{proof}
 First, we prove that $(c)$ implies $(a)$. Suppose that $K_S-E_N(H)$ is disconnected. Every supergraph $G_S$ of $H[S]$ must be disconnected. Therefore, $G_S+uv$ and $G_S-uv$ are also disconnected. Hence, $uv$ is not important for $S$ in $H$.

Now, we prove that $(b)$ implies $(a)$. If there exists a path in $S$ between $u$ and $v$ using known edges, i.e. edges of $E(H)$, then every supergraph $G_S$ of $H[S]$ will contain such path. Removing or adding an edge between two vertices that are connected by another path cannot change the connectivity of a graph. Therefore, $G_S+uv$ and $G_S-uv$ must have the same connectivity. Hence, $uv$ is not important for $S$ in $H$.

Finally, we show that if $(b)$ and $(c)$ do not hold, then $(a)$ also cannot hold, i.e. $uv$ is an important edge for $S$ in $H$. To show that $uv$ is important, we will build a supergraph $G_S$ of $H[S]$ such that $G_S+uv$ is connected and $G_S-uv$ is disconnected. First, start with $G'_S=K_S-E_N(H)$ which is a connected supergraph of $H[S]$ by assumption. Since there are no paths between $u$ and $v$ in $S$ using only known edges of $H$, we must have a path in $S$ between $u$ and $v$ using both known and unknown edges, i.e. edges of $E(H)$ and $E_U(H)$. Consider such a path $P$ and an edge $e_P$ on $P$ that is unknown in $H$. Recall that $E(G'_S)=E(H[S])\cup E_U(H[S])$. We can thus remove $e_P$ from $G'_S$. If $G'_S$ is still connected, then we repeat this process by choosing another path between $u$ and $v$ in $G'_S$ and removing another edge of $E_U(H[S])$ from this path until $G'_S$ is disconnected. This process must end because at worst we remove every path between $u$ and $v$ in $G'_S$. We call $G_S$ the resulting graph. By construction, $G_S-uv=G_S$ is disconnected. Let us show that $G_S+uv$ is connected. Let $wx$ be the last edge removed by this process and consider two vertices $y$ and $z$ in $G_S$ with no paths between them. Since $G_S+wx$ is connected, there exists a path $P_{wx}$ from $y$ to $z$ going through $wx$. By construction, $wx$ exists on a path in $G_S+wx$ between $u$ and $v$. Therefore, there exists a path $P_{uv}$ between $w$ and $x$ going through $uv$ in $G_S+uv$. Thus, we have a path between $y$ and $z$ which consists of $P_{wx}$ where $wx$ is replaced by $P_{uv}$ in $G_S+uv$. This shows that $G_S+uv$ is connected and concludes our proof.
\end{proof}

We are ready to prove \Cref{thm:bounded-degree}.
\begin{proof}[\Cref{thm:bounded-degree}]
Let $\tup_k$ and $\otup_k$ be the $k$-sets on $V$ and let $n=|V|$. To prove~\Cref{thm:bounded-degree}, we describe an algorithm that produces the desired skeletons of width $d$ in time polynomial in $n$.
\begin{enumerate}
    \item Let $T\in\tup_k$. Determine the neighborhood of $T$. Enumerate all possible connected graphs with maximum degree $d$ on vertices of the closed neighborhood of $T$ to obtain a family of partial graphs. Proceed with the next steps for each partial graph. 
    \item For each partial graph $H$, obtain a layering $(L_i)^{\ell}_{i=0}$ with \Cref{lem:alg_layers} from $\tup_k$, $\otup_k$, $H$, and $T$. For every $2\leq i\leq \ell$, let $\cgV_i$ be the family of maximal sets of vertices in layer $L_i$ with the same neighborhood in the previous layer $L_{i-1}$. Let $\{V_1,\dots,V_m\}=\bigcup^\ell_{i=2}\cgV_i$ be all such sets for all layers. By definition, these sets have size at most the maximum degree of $G$ which is $d$. Now, determine every unknown edge that is not contained in a $V_j$ ($1\leq j\leq m$).
    \item For every $1\leq j\leq m$, we determine every unknown edge $uv$ in $V_j$ such that $u$ (resp. $v$) is contained in a connected subgraph $G'$ (of the current partial graph) of order $k-1$, $v\notin V(G')$ (resp. $u\notin V(G')$), and $v$ (resp. $u$) is not adjacent to any vertex in $G'-u$ (resp. $G'-v$), i.e $vw\in E_N$ (resp. $uw$) for every $w\in V(G'-u)$ (resp. $w\in V(G'-v)$). Let the resulting partial graph be $H^*$. 
    \item For each $1\leq j\leq m$, we enumerate every possible supergraph of $H^*[V_j]$ and keep the ones that ``satisfy the $k$-sets'' to obtain the family $\cgC_j$.
\end{enumerate}

Now, we describe the different steps in more details and prove that the completions of each skeleton consisting of $H^*$, $V_1,\dots,V_m$, $\cgC_1,\dots,\cgC_m$ are exactly the connected graphs with maximum degree $d$ consistent with $\tup_k$ and $\otup_k$.

An invariant that we keep throughout our algorithm is that we will generate many partial graphs such that the set of supergraphs of these partial graphs contain every graph that is consistent with $\tup_k$ and $\otup_k$. If at some point the (known) edges and non-edges of a partial graph contradicts a given $k$-set, then this partial graph is eliminated from the family of current partial graphs. 

\textbf{Step 1.} Choose an arbitrary connected $k$-set $T$ from $\tup_k$. We can determine the (open) neighborhood of $T$ by checking for each vertex $v$ in $V-T$ if there exists $u\in T$ such that $T-u+v$ is in $\tup_k$. In this way, we obtain every vertex in the closed neighborhood of $T$. This is done in time $O(kn)$. Since we are looking for graphs with maximum degree at most $d$, we have at most $2^{d^2k^2}$ possible graphs on the closed neighborhood of $T$: more precisely, we have at most $dk$ vertices in the open neighborhood of $T$ so $dk+k$ vertices and $\binom{dk+k}{2}$ unknown edges in total; thus, $2^{\binom{dk+k}{2}}$ possible graphs. For each of these graphs, we can check in time $\binom{dk+k}{k}$ if they are consistent with the $k$-sets on the closed neighborhood of $T$. This gives us a family of partial graphs where for each partial graph $H$, the known edges are exactly the ones inside $N_H[T]$. For fixed $k$ and $d$, Step 1 can be done in time $O(n)$.

\textbf{Step 2.} For each consistent partial graph $H$ from Step 1, we will build one super partial graph $H^*$ (at the end of Step 3) which corresponds to one skeleton. Since we have a constant number of such partial graphs (at most $2^{d^2k^2}$), our family $\cgF$ of skeletons will have constant size. Recall that $T$ is connected in $H$, $|T|=k$, and $N_H(T)$ is non-empty otherwise no connected graphs would be consistent with $\tup_k$ and $\otup_k$. Therefore, we can apply the procedure in \Cref{lem:alg_layers} in time $O(n^2)$ to obtain $H'$ and the layering $(L_i)^\ell_{i=0}$. We already know every edge and non-edges in $L_0$, in $L_1$, and between $L_0$ and every other layers. The procedure of \Cref{lem:alg_layers} also determines every edge and non-edges between the remaining layers and guarantees our invariant. Thus, the only unknown edges of $H'$ are inside of each layer $L_i$ for $i\geq 2$. 

For every $2\leq i\leq \ell$, let $\cgV_i$ be the family of maximal sets of vertices in layer $L_i$ with the same neighborhood in the previous layer $L_{i-1}$. Let $\{V_1,\dots,V_m\}=\bigcup^\ell_{i=2}\cgV_i$ be all such sets for all layers. By definition, these sets have size at most the maximum degree of $G$ which is $d$. These sets can be 
 easily determined in time $O(d^2n)$. Let $uv\in E_U(H')$ be an unknown edge that is not inside $V_j$ for any $1\leq j\leq m$. As observed previously, $uv$ must be in a layer $L_i$ for some $2\leq i\leq \ell$. This implies that $u$ has a neighbor $w$ in $L_{i-1}$ that is not adjacent to $v$. Since $L_0=T$ is connected, $|T|\geq k$, and every edge and non-edge in $T$ is known, we can find in $O(k)$ time a connected subgraph $C$ of $H'$ of size $k-3$ in $G[L_0\cup\dots\cup L_{i-2}]$ containing a neighbor of $w$ in $L_{i-2}$. Observe that $v$ is isolated in $G[V(C)\cup\{v,w\}]$. Consider $V(C)\cup\{u,v,w\}$. If it is in $\tup_k$, then $uv$ must be an edge, otherwise, $uv$ must be a non-edge. Therefore, every edge outside of a $V_j$ can be determined in time $O(kn^2)$ in total. We call the resulting partial graph $H^*$. For fixed $k$ and $d$, Step 2 can be done in time $O(n^2)$ for each partial graph $H$.

\textbf{Step 3.} For every $1\leq j\leq m$, we determine every unknown edge $uv$ in $V_j$ such that $u$ (resp. $v$) is contained in a connected subgraph $G'$ (of the current partial graph) of order $k-1$, $v\notin V(G')$ (resp. $u\notin V(G')$), and $v$ (resp. $u$) is not adjacent to any vertex in $G'-u$ (resp. $G'-v$), i.e $vw\in E_N$ (resp. $uw$) for every $w\in V(G'-u)$ (resp. $w\in V(G'-v)$). Indeed, if $V(G'+v)\in \tup_k$ (resp. $V(G'+u)\in\tup_k$), then $uv$ must be an edge, otherwise, $uv$ must be a non-edge. We repeat this procedure until no such unknown edge remains and call the resulting graph $H^*$. There are at most $2^{\binom{d}{2}}$ such unknown edges for each $V_j$ since $|V_j|\leq d$; therefore, there are at most $2^{d^2}m=O(2^{d^2}n)$ such unknown edges. For each edge, it takes $O(k)$ time to verify the wanted property. Thus, it takes $O(2^{d^2}kn)$ time to check every unknown edge once. After each round, either an unknown edge becomes known or we did not find any with the wanted property and Step 3 ends. Hence, this procedure lasts for at most $O(2^{d^2}kn)$ rounds which results in $O(2^{2d^2}k^2n^2)$ time in total. For fixed $k$ and $d$, Step 3 takes $O(n^2)$ time.

Before moving on to the final step, we claim the following.

\begin{claim}
    For every $k$-set $S$, there exists $1\leq j\leq m$, such that $V_j$ contains every important unknown edge of $S$ in $H^*$.
\end{claim}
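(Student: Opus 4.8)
The plan is to show that if a $k$-set $S$ has any important unknown edge in $H^*$, then \emph{all} its important unknown edges lie in a single $V_j$; if $S$ has no important unknown edge, the claim is vacuous (any $V_j$ works, or $m=0$). So assume $uv$ is an important unknown edge of $S$ in $H^*$. Since $E_U(H^*) = \bigcup_{i\in[m]}\binom{V_i}{2}$, there is some $V_j$ with $u,v\in V_j$; I will argue every other important unknown edge of $S$ also has both endpoints in this same $V_j$. The key structural facts to invoke are: (1) by \Cref{prop:unimportant}, since $uv$ is important for $S$ in $H^*$, $K_S - E_N(H^*)$ is connected and there is \emph{no} path between $u$ and $v$ in $S$ using only known edges of $H^*$; and (2) $V_j$ sits inside some layer $L_i$ ($i\ge 2$), its vertices share a common neighborhood $W\subseteq L_{i-1}$, and — crucially, thanks to Step 3 — for any unknown edge $xy$ inside $V_j$ that is \emph{still unknown in $H^*$}, neither $x$ nor $y$ can be completed to a connected $(k-1)$-subgraph of $H^*$ avoiding the other while being non-adjacent to it. I also want to use that the layering is ``tight'': vertices of $V_j$ have all their $H^*$-neighbors confined to $L_{i-1}\cup L_i\cup L_{i+1}$, with the edges to $L_{i-1}$ and to $L_{i+1}$ fully known.

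First I would pin down where the vertices of $S$ with incident unknown edges can live. Every unknown edge of $H^*$ is inside some $V_t$, hence inside a single layer; so the ``unknown part'' of $H^*[S]$ is a disjoint union of cliques-of-unknowns, one per group $V_t$ that $S$ meets in $\geq 2$ vertices. Now I would show that $S$ can meet at most one layer in two or more vertices that are joined by an unknown edge — or more precisely, that only one group $V_j$ can host an important unknown edge. Suppose $uv$ (inside $V_j\subseteq L_i$) and $u'v'$ (inside $V_{j'}\subseteq L_{i'}$) are both important for $S$ in $H^*$, with $j\neq j'$. Because $uv$ is important, $K_S-E_N(H^*)$ is connected, so in particular there is a path in $S$ from the block around $V_j$ to the block around $V_{j'}$; but then I want to derive that one of the two unknown edges is actually \emph{not} important, contradicting the assumption. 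The mechanism: if the ``known'' graph $H^*[S]$ already connects $u$ to $v$ through the rest of $S$, then $uv$ is not important by \Cref{prop:unimportant}(b); so the known graph on $S$ must keep $u$ and $v$ in different components, and likewise $u'$ and $v'$. Since $V_j$ and $V_{j'}$ are in (possibly different) layers and all inter-layer edges touching them are known, the only way $S$ stays ``potentially connected'' (condition (c) fails) is through a very constrained skeleton, and I would rule out having two independent unknown-edge blocks both being bottlenecks.

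The cleanest route I see: argue that if $uv\in\binom{V_j}{2}$ is important for $S$ in $H^*$, then $S\setminus V_j$ together with all known edges of $H^*[S]$ forms a graph in which $S\setminus V_j$ is \emph{disconnected from itself in a way that $V_j$ must bridge} — i.e. $V_j$ is a cut-set of $K_S - E_N(H^*)$ separating the rest of $S$ — and moreover $|S\cap V_j|$ is ``responsible'' for all the slack. Concretely: since $uv$ is important, delete $V_j$ from $K_S - E_N(H^*)$; I claim the result is disconnected. If it were connected, then all of $S\setminus\{u,v\}$ lies in one component of $H^*[S]$ (using known edges among $S\setminus V_j$ plus known edges to $u$ and to $v$ separately), and combined with the fact (from Step 2) that any vertex of $V_j$ has a \emph{known} neighbor in $L_{i-1}$ while $L_{i-1}\cap S \subseteq S\setminus V_j$, one shows $u$ and $v$ are actually joined by a known path through that common neighborhood $W\cap S$ — unless $W\cap S=\emptyset$, which is exactly the case Step 3 was designed to eliminate via the ``connected $(k-1)$-subgraph'' test. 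Once $V_j$ is a cut-set, any important unknown edge must have both endpoints inside one side of the cut or inside $V_j$; unknown edges are confined to groups, each group to one layer, and the cut structure forces every remaining important unknown edge into $V_j$ itself. I would write this as: (a) reduce to showing $V_j$ separates $K_S-E_N(H^*)$; (b) prove that using the layering's known inter-layer edges plus the Step-3 guarantee; (c) conclude that a second block $V_{j'}$ with an important unknown edge would lie entirely on one side of the cut, but then $u'v'$ is non-important because the $u',v'$-connection in $S$ can be routed through $V_j$ and the known edges, or $K_S-E_N(H^*)$ restricted to that side is disconnected.

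The main obstacle I anticipate is step (b): carefully showing that \emph{whenever} $V_j\cap S$ fails to be a separator of $K_S - E_N(H^*)$, the edge $uv$ is forced to be non-important — this is where all three ingredients (the completeness of inter-layer information from \Cref{lem:alg_layers}, the existence of a known neighbor in the previous layer from Step 2's construction, and especially the Step-3 saturation that kills unknown edges whose endpoint extends to a connected $(k-1)$-set) have to be combined correctly, and getting the case analysis on $W\cap S=\emptyset$ versus $W\cap S\neq\emptyset$ right is delicate. A secondary subtlety is handling $k$-sets $S$ that intersect several of the $V_t$'s in exactly one vertex each (these contribute no unknown edges and are harmless) versus those meeting one $V_t$ in $\ge 2$ vertices; I will want the opening reduction to make clear that ``important unknown edge'' already forces the $\ge 2$-intersection in exactly one block.
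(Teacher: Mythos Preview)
Your separator approach is genuinely different from the paper's, and the gap you yourself flag in step~(b) is real and not closed. The central unproven assertion is that $V_j\cap S$ separates $K_S-E_N(H^*)$. Your justification reads: ``if $(K_S-E_N(H^*))-(V_j\cap S)$ were connected, then all of $S\setminus\{u,v\}$ lies in one component of $H^*[S]$.'' This inference fails: $K_S-E_N(H^*)$ contains both known \emph{and unknown} edges, so connectivity of $S\setminus(V_j\cap S)$ there may rely on unknown edges inside other blocks $V_{j'}$ and says nothing about connectivity in the known graph $H^*[S]$. Your appeal to Step~3 for the case $W\cap S=\emptyset$ is only a gesture: Step~3 does not ``eliminate'' that case for an arbitrary $k$-set $S$, it eliminates specific unknown edges, and to exploit it you must \emph{exhibit} a connected $(k-1)$-subgraph of $H^*$ containing $u$ whose other vertices are all known non-neighbors of $v$. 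Step~(c) is also unsubstantiated: even granting the separator property, you give no reason why an important unknown edge $u'v'$ lying on one side of the cut becomes unimportant --- ``routing through $V_j$'' may traverse unknown edges and hence does not yield a known $u'$--$v'$ path.

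What the paper does, and what your sketch lacks, is exactly this missing construction. It argues directly: given important unknown edges $uv\in V_j$ and $xy\in V_{j'}$ with $j\ne j'$, take a path $P$ from $u$ to $x$ inside $S$; using \Cref{prop:unimportant} and a minimality reduction, force $P$ to use only known edges; then show $vt\in E_N(H^*)$ for every $t\in P-u$. The decisive move is to pick the \emph{lowest} vertex $t$ on $P$ (minimal layer index $i'$), choose a neighbor $q$ of $t$ in $L_{i'-1}$ with $qv\in E_N(H^*)$ --- such $q$ exists because if every $L_{i'-1}$-neighbor of $t$ were also a neighbor of $v$, then $v$ would land in $t$'s block, contradicting $j\ne j'$ --- and then extend through $q$ into $L_0\cup\cdots\cup L_{i'-2}$ to assemble, together with $P$, a connected $(k-1)$-set containing $u$ and consisting entirely of known non-neighbors of $v$. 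That set is a Step~3 witness, so $uv$ would already have been determined in $H^*$, a contradiction. This ``drop below the path via its lowest point'' is the bridge between the Step~3 guarantee (which is global in $H^*$) and the local structure of $S$; your outline never builds it.
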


\begin{proof}
Suppose by contradiction that there exist important unknown edges $uv$ and $xy$ for $S$ in $H^*$ such that $uv$ is in $V_i$ and $xy$ is in $V_j$ with $i\neq j$. W.l.o.g. there must exist a path $P$ in $S$ between $u$ and $x$ on $E(H^*)$ and $E_U(H^*)$, otherwise, $S$ will always induce a disconnected subgraph so $uv$ and $xy$ are not important. 
    
We can assume that the unknown edges of $P$ are not important for $S$, otherwise we can always consider a shorter path of $P$ between two important unknown edges that are in different sets of $V_1,\dots,V_m$ (recall that edges outside of $V_1,\dots,V_m$ are all determined). 
    
Moreover, we can even pick $P$ to be only on edges of $E(H^*)$ thanks to \Cref{prop:unimportant}. Indeed, if $P$ contains some unimportant edge $wz$, then we can replace $wz$ in $P$ by a path in $S$ on edges of $E(H^*)$ between $w$ and $z$ since $K_S-E_N(H)$ cannot be disconnected (otherwise no unknown edge can be important). 
    
Observe that $v$ cannot be adjacent in $H^*$ to any vertex of $P-u$ since it would imply that $uv$ is unimportant for $S$ by \Cref{prop:unimportant}. For every vertex $t$ of $P-u$, we will show that $vt$ has to be a known edge which implies that it must be a non-edge due to the previous observation. Indeed, $vx$ is known after Step 2 since it is outside of $V_1,\dots,V_m$ (recall that they are disjoint and $v\in V_i$, $x\in V_j$). For the internal vertices $t$ of $P$, if $vt$ is an unknown edge, then it must be unimportant for $S$; otherwise there exists a shorter path in $P$ between two important unknown edges $vt$, which must be in $V_i$ since $V_1,\dots,V_m$ are disjoint, and $xy$, which is in $V_j$. However, By \Cref{prop:unimportant}, there exists a path between $v$ and $t$ in $S$ on $E(H^*)$; combined with the path between $u$ and $t$, this forms a path between $u$ and $v$ in $S$ on $E(H^*)$. By \Cref{prop:unimportant}, this implies that $uv$ is unimportant for $S$ in $H^*$, a contradiction. To sum up, for every vertex $t\in P-u$, $vt\in E_N(H^*)$. Symmetrically, the same holds for $y$, i.e. for every vertex $t\in P-x$, $yt\in E_N(H^*)$. 
    
 Now, let $t$ be a ``lowest point'' on $P$, i.e. $t$ belongs to the layer $L_{i'}$ with the smallest $i'$ among vertices in $P$. First, suppose that we can choose $t=x$. There exists a neighbor $q$ of $x$ in $L_{i'-1}$ such that $qv\in E_N(H^*)$. Indeed, $qv$ must be a known edge since an edge between two layers is known after Step 2 and if $qv$ is adjacent for every neighbor $q$ of $x$, then $v$ must be in $V_j$, a contradiction. Now, we take a neighbor $r$ of $q$ in $L_{i'-2}$ which exists since $i'\geq 2$ ($L_0$ and $L_1$ are completely determined). It suffices to find a connected subgraph $C$ in $H^*[L_0,\dots,L_{i'-2}]$ containing $r$ and such that $C$ along with $r$, $q$, $P$, and $v$ has order $k$ (such $C$ exists since $L_0=T$ has size order at least $k$ and layers $0$ to $i'-2$ are connected). Observe that $vq\in E_N(H^*)$ and $vc\in E_N(H^*)$ for every $c\in V(C)$ due to there being at least one layer ($L_{i'-1}$) separating $v$ and these vertices. Such a set verifies the property required to determine unknown edges in Step 3, therefore $uv$ must be known, a contradiction. Symmetrically, the same holds when we can choose $t=u$, i.e. $xy$ must be a known edge. Finally, we look at the case where $t$ is an internal vertex of $P$, i.e. $u$ (resp. $x$) is in layer $L_{j'}$ for some $j'>i$. The same argument as above will imply that both $uv$ and $xy$ must be known after Step 3. Indeed, a neighbor $q$ of $t$ in $L_{i'-1}$ is such that $qv\in E_N(H^*)$ since $v\in L_{j'}$ for $j'>i$ and the rest of the arguments unfolds in a similar fashion.    
\end{proof}

\textbf{Step 4.} To finish our skeletons, we only need to generate the family of collections of graphs $\cgC_1,\dots,\cgC_m$. For every $1\leq j\leq m$, $\cgC_j$ has size at most $2^{\binom{d}{2}}$ since $V_j$ has size at most $d$. For every possible supergraph $G_j$ on $V_j$ that results in vertices with degree at most $d$ in the whole graph (this is well defined since the $V_1,\dots,V_m$ are disjoint), we verify if $G_j$ satisfy our $k$-sets by checking the following. For every $k$-set $S\in\tup_k$ (resp. $S'\in\otup_k$) such that $S\cap V_j\neq \emptyset$ (resp. $S'\cap V_j\neq \emptyset$), we assume that every unknown edge in $S$ (resp. $S'$) that is not included in $V_j$ is a non-edge (resp. an edge), the unknown edges in $V_j$ will be replaced with edges of $G_j$. If $S$ (resp. $S'$) induces a connected (resp. disconnected) subgraph with these assumptions, then we say that $G_j$ satisfies $S$ (resp. $S'$). If $G_j$ satisfies every $k$-set with a non-empty intersection with $V_j$, then we keep $G_j$ as a graph of our collection $\cgC_j$. For this, we need to check every $k$-set in $\tup_k$ and $\otup_k$. Therefore, Step 4 is done in $O(n^k)$ time.

\textbf{Correctness.} Now, we prove that the completions of our family of skeletons correspond to all graphs with maximum degree $d$ that are consistent with $\tup_k$ and $\otup_k$. Due to our invariant, up to the end of Step 3, a graph with maximum degree $d$ that is consistent with $\tup_k$ and $\otup_k$ must be a supergraph of one of our partial graphs. If for every skeleton, $\cgC_1,\dots,\cgC_m$ contain every possible supergraph on $V_1,\dots,V_m$, then a consistent graph must be a completion of a skeleton. In Step 3, we go further by keeping only ``relevant'' supergraphs. Now, we show that a graph with maximum degree $d$ is consistent with $\tup_k$ and $\otup_k$ if and only if it is a completion of one of our skeleton. This implies that, despite our supergraphs being determined independently for each $V_j$, any combination of these graphs will produce a consistent graph.

Suppose by contradiction that there exists a completion $G$ of a skeleton that consists of $H^*,V_1,\dots,V_m,\cgC_1,\dots,\cgC_m$ that is not consistent with $\tup_k$ and $\otup_k$ (since they all have maximum degree at most $d$). First, suppose that there exists $S\in\tup_k$ (resp. $S\in\otup_k$) such that $G[S]$ is disconnected (resp. connected). Unimportant unknown edges for $S$ in $H^*$ cannot change the connectivity of $S$ no matter their assignment; therefore, we can assume that every unimportant unknown edge for $S$ in $H^*$ is set to a non-edge in $G[S]$. The remaining unknown edges for $S$ must be important and by the previous claim, we know these edges are all inside a $V_i$ for some $1\leq i\leq m$. Since graphs in $\cgC_i$ are chosen such that they satisfy all $k$-sets that intersect with $V_i$, $G[S]$ cannot be disconnected; otherwise, no such graphs exist, i.e. no graphs with maximum degree $d$ can be consistent with $\tup_k$ and $\otup_k$. 
This concludes the proof of \Cref{thm:bounded-degree}.\qed
\end{proof}


\section{Uniqueness}\label{sec:uniqueness}



In this section, we consider a graph $G$ with $k$-sets $\tup_k(G)$ and $\otup_k(G)$ and we study the structure of the family of graphs consistent with $\tup_k(G)$ and $\otup_k(G)$ to see when $G$ is uniquely reconstructible.

The following property of connected graphs will be crucial. 

\begin{proposition}\label{prop:conn-cond}
    Let $G=(V,E)$ be a graph and let $uv\in E$. For every $S\in \tup_k(G)$ such that $v\in S,u\notin S$, there exists $v'\in S\setminus \{v\}$ such that $(S\setminus \{v'\})\cup\{u\}\in \tup_k(G)$.
\end{proposition}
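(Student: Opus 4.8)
The statement says: given an edge $uv \in E$ and a connected $k$-set $S$ containing $v$ but not $u$, we can "slide" $u$ into $S$ by removing some vertex $v' \neq v$ of $S$ so that the result is still a connected $k$-set. Let me sketch how I would prove this.

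**Approach.**

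\begin{proof}
Consider $G[S]$, which is connected by hypothesis, and consider the graph $G[S \cup \{u\}]$ on $k+1$ vertices. Since $uv \in E$ and $v \in S$, the vertex $u$ has at least one neighbor in $S$, so $G[S \cup \{u\}]$ is connected. The plan is to pick a vertex $v' \in S \setminus \{v\}$ whose deletion keeps $G[(S \cup \{u\}) \setminus \{v'\}]$ connected; then $(S \setminus \{v'\}) \cup \{u\}$ is a connected $k$-set, which is exactly what we want.

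First I would take a spanning tree $F$ of the connected graph $G[S \cup \{u\}]$. If $F$ has a leaf $v'$ that is neither $u$ nor $v$, then deleting $v'$ from $G[S \cup \{u\}]$ leaves a graph containing the connected subgraph $F - v'$ spanning all remaining vertices, so it is connected, and $v' \in S \setminus \{v\}$ as required. The only obstruction is the case where the only leaves of $F$ are among $\{u, v\}$; since a tree on at least two vertices has at least two leaves, this forces $F$ to be a path with endpoints $u$ and $v$. But I am free to choose the spanning tree, so I only need that \emph{some} spanning tree of $G[S \cup \{u\}]$ has a leaf outside $\{u,v\}$.

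The hard part is ruling out the degenerate case, i.e.\ showing we are never stuck with \emph{every} spanning tree being a $u$--$v$ path. If every spanning tree of $G[S \cup \{u\}]$ is a Hamiltonian $u$--$v$ path, then in particular $G[S\cup\{u\}]$ itself is such a path (a single path, $k+1 \geq 3$ vertices). Along this path, let $v'$ be the neighbor of $u$ (recall $u$ is an endpoint). Then $v' \in S$, and $v' \neq v$ because $v$ is the \emph{other} endpoint of the path and $k \geq 3$ means the path has more than two vertices, so its two endpoints are distinct and the neighbor of $u$ is an internal vertex (or the other endpoint only if $k+1 = 2$, excluded). Deleting $v'$ from the path disconnects $u$ (now isolated) from the rest --- so this choice does \emph{not} directly work, and I must instead argue more carefully: deleting $v'$ from $G[S\cup\{u\}]$ leaves $u$ adjacent to $v$? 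No --- so in fact in the pure-path case the correct choice is to delete an \emph{endpoint}. But the only endpoints are $u$ and $v$, both forbidden (we may not delete $v$, and deleting $u$ is pointless). This suggests the degenerate case genuinely requires a separate argument: here I would use that $G$ is the \emph{ambient} graph, not just $G[S\cup\{u\}]$ --- there may be edges of $G$ among $S \cup \{u\}$ not needed for connectivity, but actually $G[S\cup\{u\}]$ already \emph{is} that induced subgraph, so there are no extra edges to exploit. I therefore expect the real proof to proceed by induction on $k$ or by a direct ear/block-decomposition argument: take a DFS tree of $G[S\cup\{u\}]$ rooted at $u$; its deepest leaf $v'$ lies in $S$, and if $v' = v$ one reroots or picks the second-deepest leaf, using $2$-connectivity of blocks to guarantee an alternative. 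The main obstacle, to be dispatched with care, is precisely this bookkeeping that ensures a removable vertex distinct from $v$ always exists; the rest is routine.
\end{proof}
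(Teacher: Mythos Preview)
Your approach overcomplicates a one-line argument and then gets stuck on a difficulty that is not really there.

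The paper works with a spanning tree of $G[S]$, not of $G[S\cup\{u\}]$. Since $|S|=k\geq 2$, this tree has at least two leaves, so some leaf $v'\neq v$ exists; then $G[S\setminus\{v'\}]$ is still connected (it contains the tree minus a leaf), and appending $u$ via the edge $uv\in E$ keeps it connected. That is the entire proof. By passing to $G[S\cup\{u\}]$ you impose the extra requirement that the chosen leaf avoid $u$ as well as $v$, and it is this self-imposed constraint that sends you into the ``degenerate'' case.

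Even within your framework, the degenerate case dissolves once you actually use the hypothesis $uv\in E$. You write that in $G[S\cup\{u\}]$ ``there are no extra edges to exploit'', but the edge $uv$ itself lies in $G[S\cup\{u\}]$ and is exactly the edge you need: choose a spanning tree $F$ of $G[S\cup\{u\}]$ that \emph{contains} $uv$ (always possible). If both $u$ and $v$ were leaves of $F$, then $uv$ would be the unique edge of $F$, forcing $k+1=2$; hence for $k\geq 2$ at most one of $u,v$ is a leaf of $F$, and since $F$ has at least two leaves, some leaf lies in $S\setminus\{v\}$. Your ``every spanning tree is a $u$--$v$ Hamiltonian path'' scenario therefore cannot occur, and there is no need for DFS trees, ear decompositions, or induction. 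The gap in your write-up is not a missing technical tool but a missed use of the one hypothesis you were given.
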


\begin{proof}
Since $G[S]$ is connected, there exists a spanning tree of $G[S]$. Since a tree always has at least two leaves, consider $v'\neq v$ to be a leaf of such a spanning tree. Observe that $G[S\setminus\{v'\}]$ also contains a spanning tree as a subgraph. Therefore, it is connected. Combining it with the fact that $uv\in E$, we conclude that $G[(S\setminus \{v'\})\cup\{u\}]$ is connected. In other words, $(S\setminus \{v'\})\cup\{u\}\in \tup_k(G)$.
\qed
\end{proof}

This motivates the following definition.

\begin{definition}
    Let $G=(V,E)$ be a graph and let $uv\notin E$. We say that $u$ and $v$ are \emph{clear non-neighbors} if (i) there exists $S\in \tup_k(G)$ such that $v\in S,u\notin S$ and for every $v'\in S\setminus\{v\}$, it holds that $(S\setminus \{v'\})\cup\{u\}\in \otup_k(G)$, or (ii) there exists $S\in \tup_k(G)$ such that $u\in S,v\notin S$ and for every $u'\in S\setminus\{u\}$, it holds that $(S\setminus \{u'\})\cup\{v\}\in \otup_k(G)$. Otherwise, we say that they are \emph{fake neighbors}.
\end{definition}

\Cref{prop:conn-cond} implies that non-edges between clear non-neighbors are present in every graph consistent with the same $k$-sets. 

\begin{corollary}\label{cor:non-edge-cond}
    Let $G=(V,E)$ be a graph and let $uv\notin E$. If $u$ and $v$ are clear non-neighbors, then for every graph $G'$ consistent with $\tup_k(G)$ and $\otup_k(G)$, we have $uv\notin E(G')$.
\end{corollary}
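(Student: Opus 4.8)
The plan is to deduce \Cref{cor:non-edge-cond} directly from \Cref{prop:conn-cond} applied contrapositively. Suppose $G'$ is consistent with $\tup_k(G)$ and $\otup_k(G)$, so that $\tup_k(G')=\tup_k(G)$ and $\otup_k(G')=\otup_k(G)$; assume for contradiction that $uv\in E(G')$. Since $u$ and $v$ are clear non-neighbors in $G$, one of the two symmetric conditions in the definition holds; without loss of generality assume condition (i), so there is a $k$-set $S\in\tup_k(G)=\tup_k(G')$ with $v\in S$, $u\notin S$, and $(S\setminus\{v'\})\cup\{u\}\in\otup_k(G)=\otup_k(G')$ for every $v'\in S\setminus\{v\}$.

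Now apply \Cref{prop:conn-cond} to the graph $G'$, using the edge $uv\in E(G')$ and the set $S\in\tup_k(G')$ (which indeed satisfies $v\in S$, $u\notin S$). The proposition yields some $v'\in S\setminus\{v\}$ with $(S\setminus\{v'\})\cup\{u\}\in\tup_k(G')$. But $\tup_k(G')=\tup_k(G)$ and, by the clear-non-neighbor hypothesis, $(S\setminus\{v'\})\cup\{u\}\in\otup_k(G)=\otup_k(G')$, so this $k$-set would be simultaneously connected and disconnected in $G'$ — a contradiction, since $\tup_k(G')$ and $\otup_k(G')$ are disjoint (every $k$-set induces either a connected or a disconnected subgraph, not both). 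Hence $uv\notin E(G')$. The case where condition (ii) holds is identical with the roles of $u$ and $v$ exchanged.

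There is essentially no obstacle here: the corollary is a one-line consequence of \Cref{prop:conn-cond} together with the observation that the connected and disconnected $k$-sets partition $\binom{V}{k}$, so the "hardest" part is merely to unwind the definition of clear non-neighbors correctly and to remember to invoke \Cref{prop:conn-cond} on $G'$ rather than on $G$. One should also note at the outset that since $|S|=k$ and $u\notin S$, the set $(S\setminus\{v'\})\cup\{u\}$ is again a $k$-set, so speaking of its membership in $\tup_k$ or $\otup_k$ is well defined.
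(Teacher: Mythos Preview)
Your proof is correct and takes essentially the same approach as the paper, which presents the corollary as an immediate consequence of \Cref{prop:conn-cond} without spelling out the details. Your write-up simply makes explicit the contrapositive application of \Cref{prop:conn-cond} to $G'$ that the paper leaves implicit.
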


What follows is a sufficient condition for two vertices to be clear non-neighbors.

\begin{proposition}\label{prop:suff-cond-non-neighbors}
    Let $G=(V,E)$ be a graph and let $u\in V$. If there exists a subgraph $G_u$ such that $u$ is isolated in $G_u$, $G_u-u$ is connected, and $|V(G_u-u)|\geq k$, then for every $v\in V(G_u)$, $u$ and $v$ are clear non-neighbors.
\end{proposition}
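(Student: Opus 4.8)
The goal is to exhibit, for an arbitrary $v\in V(G_u)$, a witnessing $k$-set $S$ demonstrating that $u$ and $v$ are clear non-neighbors in the sense of the definition. The natural candidate is to take $S$ to be a $k$-subset of $V(G_u-u)$ that contains $v$ and induces a connected subgraph; such an $S$ exists because $G_u-u$ is connected with at least $k$ vertices, so we may grow a connected subtree of a spanning tree of $G_u-u$ rooted at $v$ until it has exactly $k$ vertices. This $S$ satisfies $v\in S$, $u\notin S$, and $S\in\tup_k(G)$ (note $S$ is connected in $G$ since it is connected already in the subgraph $G_u$). We then want to verify condition (i) of the definition: for every $v'\in S\setminus\{v\}$, the set $(S\setminus\{v'\})\cup\{u\}$ is in $\otup_k(G)$, i.e.\ induces a disconnected subgraph of $G$.

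\textbf{Key step.} The crucial observation is that $u$ is isolated in $G_u$, and $S\setminus\{v'\}\subseteq V(G_u-u)$. In the graph $G$, the vertex $u$ may have neighbors, but we need to control its neighbors \emph{within} $(S\setminus\{v'\})\cup\{u\}$. Here one must be slightly careful: the proposition only tells us $u$ is isolated in the \emph{subgraph} $G_u$, not that $u$ has no neighbors in $V(G_u-u)$ in the full graph $G$. So the cleanest route is to additionally choose $S$ so that it avoids $N_G(u)$ as much as possible — but that may be impossible if $v$ itself lies in $N_G(u)$. Let me reconsider: actually the right move is that $G_u$ being a subgraph of $G$ with $u$ isolated means precisely that for the purposes of the $k$-sets we use, we should pick $S$ inside $V(G_u-u)$ and then note that $(S\setminus\{v'\})\cup\{u\}$ induces a subgraph of $G$ in which — hmm, but edges of $G$ not in $G_u$ can still appear. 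The resolution: we use that $G_u$ is a subgraph whose \emph{vertex set} is what matters, and we exploit that $u$ isolated in $G_u$ with $G_u-u$ connected forces, via \Cref{prop:conn-cond} applied contrapositively, that no single-vertex swap can reconnect $u$. Concretely, if $(S\setminus\{v'\})\cup\{u\}$ were connected in $G$, then since $u$ has a neighbor in it, \Cref{prop:conn-cond} (with roles of $u,v$ swapped) would give some way to swap back into a connected set inside $V(G_u-u)\cup\{u\}$ that keeps $u$ — iterating, we would find $u$ in a connected induced subgraph lying inside $V(G_u)$ with $u$ non-isolated, contradicting that $u$ is isolated in $G_u$ only if $G$ restricted to $V(G_u)$ equals $G_u$.

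\textbf{Main obstacle.} The genuine difficulty is precisely this gap between ``$u$ isolated in the subgraph $G_u$'' and ``$u$ has no neighbor in $V(G_u-u)$ in $G$.'' If $G_u$ is required to be an \emph{induced} subgraph, the argument is immediate: any $k$-subset $S$ of $V(G_u-u)$ containing $v$ and connected in $G_u$ (hence in $G$, since induced) works, and $(S\setminus\{v'\})\cup\{u\}$ has $u$ with no $G$-neighbor inside it, so it is disconnected; thus (i) holds and $u,v$ are clear non-neighbors, and we are done. I expect the intended reading is that $G_u$ is an induced subgraph (or equivalently that "$u$ is isolated in $G_u$" is meant with respect to $G$, i.e.\ $N_G(u)\cap V(G_u)=\emptyset$), in which case the proof is the short argument above: pick a connected $k$-subset $S\ni v$ of $V(G_u-u)$ via spanning-tree growth, observe $S\in\tup_k(G)$, and observe that for every $v'\in S\setminus\{v\}$ the vertex $u$ is isolated in $G[(S\setminus\{v'\})\cup\{u\}]$ so that set lies in $\otup_k(G)$, establishing condition (i). The only thing to double-check is the edge case $k=2$ (then $S=\{v,v'\}$ for some neighbor $v'$ of $v$, and $(S\setminus\{v'\})\cup\{u\}=\{v,u\}$, which is disconnected since $uv\notin E$), which is fine. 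So the plan is: (1) reduce to finding one suitable connected $k$-set inside $V(G_u-u)$ through $v$; (2) build it by truncating a spanning tree; (3) check condition (i) using isolation of $u$; (4) handle small $k$ separately. The subtle point to get right in the writeup is step (3)'s reliance on the precise meaning of "isolated in $G_u$."
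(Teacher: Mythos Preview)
Your approach is correct and essentially identical to the paper's: pick a connected $k$-subset $S\subseteq V(G_u-u)$ containing $v$ (the paper just asserts such a subgraph exists, you spell out the spanning-tree truncation), then note that $(S\setminus\{v'\})\cup\{u\}$ is disconnected for every $v'\in S\setminus\{v\}$ since $u$ has no neighbor in $S$. Your worry about ``subgraph'' versus ``induced subgraph'' is well taken---the paper's proof makes exactly the same tacit assumption (it jumps straight from ``$u$ is isolated in $G_u$'' to $(S\setminus\{u'\})\cup\{u\}\in\otup_k(G)$), and every application of the proposition in the paper does use an induced $G_u$.
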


\begin{proof}
Consider $v\in V(G_u)$, a connected subgraph of $G_u$ of size $k$ containing $v$, and let $S$ be its set of vertices. We have $S\in\tup_k(G)$, $v\in S$ and $u\notin S$. Moreover, for every vertex $u'$ in $S-\{v\}$, $S-\{u'\}+\{u\}\in\otup_k(G)$ because $u$ is an isolated vertex in $G_u$. Therefore, by definition, $u$ and $v$ are clear non-neighbors.
\qed
\end{proof}

\Cref{lem:struct-nedges-nunique} describes the structure of a graph in which there is some pair of fake neighbors. 

\begin{lemma}\label{lem:struct-nedges-nunique}
    Let $G=(V,E)$ be a graph and let $uv\notin E$ be such that $u$ and $v$ are fake neighbors.
    Let $C_u$ (resp. $C_v$) be the connected component containing $u$ in $G-N(v)$ (resp. $v$ in $G-N(u)$). We have $|V(C_u)|\leq k-1$ and $|V(C_v)|\leq k-1$. In particular, the connected component(s) containing $u$ and $v$ in $G-(N(u)\cap N(v))$ has size (have size in total) at most $|V(C_u)\cup V(C_v)|\leq 2k-2$. Moreover, if $|V(C_u)\cup V(C_v)|\leq k-1$, then $G$ is not uniquely reconstructible.
\end{lemma}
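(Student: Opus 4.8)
The plan is to prove the three assertions in order. First I would bound $|V(C_u)|$ (and, symmetrically, $|V(C_v)|$) by contradiction. Suppose $|V(C_u)| \geq k$. Since $C_u$ is a connected component of $G - N(v)$, no vertex of $C_u$ is adjacent to $v$, so in the induced subgraph $G_v := G[V(C_u) \cup \{v\}]$ the vertex $v$ is isolated, while $G_v - v = G[V(C_u)]$ is connected and has at least $k$ vertices. Applying \Cref{prop:suff-cond-non-neighbors} with $v$ in the role of the isolated vertex then shows that every vertex of $V(C_u)$, in particular $u$, is a clear non-neighbor of $v$, contradicting the hypothesis that $u$ and $v$ are fake neighbors. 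Hence $|V(C_u)|, |V(C_v)| \leq k-1$, and $|V(C_u) \cup V(C_v)| \leq 2k-2$ follows at once.

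For the ``in particular'' clause, I would set $W := V(C_u) \cup V(C_v)$ and first observe that $W$ is disjoint from $N(u) \cap N(v)$ --- vertices of $C_u$ avoid $N(v)$ and vertices of $C_v$ avoid $N(u)$ --- so $W$ lies inside $G - (N(u) \cap N(v))$ and contains both $u$ and $v$. The crux is to show that $W$ is a union of connected components of $G - (N(u) \cap N(v))$, i.e.\ that $G$ has no edge $wz$ with $w \in W$, $z \notin W$, and $z \notin N(u) \cap N(v)$. Suppose such an edge exists, say with $w \in V(C_u)$; then $z \notin V(C_u)$ forces $z \in N(v)$ (otherwise $z$ would lie in the same $(G - N(v))$-component as $w$), hence $z \in N(v) \setminus N(u)$, but then $z$ is adjacent to $v$ in $G - N(u)$ and so lies in $C_v \subseteq W$, a contradiction; the case $w \in V(C_v)$ is symmetric. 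It follows that the components of $G - (N(u) \cap N(v))$ containing $u$ and $v$ are contained in $W$, so their total size is at most $|W| \leq 2k-2$.

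For the last claim, assume $|V(C_u) \cup V(C_v)| \leq k-1$; I would show that $G' := G + uv$ is consistent with $\tup_k(G)$ and $\otup_k(G)$, which settles non-unique reconstructibility since $uv \notin E(G)$ means $G' \neq G$. Because adding an edge can only create connectivity, it suffices to rule out a $k$-set $S$ with $u, v \in S$, $G[S]$ disconnected, and $G[S] + uv$ connected. For such an $S$, connectivity of $G[S]+uv$ forces $G[S]$ to have exactly two components, $A \ni u$ and $B \ni v$, with $A \cup B = S$ and no $G$-edge between $A$ and $B$. Then $A$ is a connected subset of $G$ containing $u$ and disjoint from $N(v)$ (a neighbor of $v$ inside $S$ would lie in $B$), hence $A \subseteq V(C_u)$; symmetrically $B \subseteq V(C_v)$. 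Therefore $k = |S| = |A \cup B| \leq |V(C_u) \cup V(C_v)| \leq k-1$, a contradiction, so $\tup_k(G') = \tup_k(G)$ and consequently $\otup_k(G') = \otup_k(G)$.

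I expect the main obstacle to be the ``closedness'' of $W$ in the middle part: the argument is short but requires carefully juggling membership in $N(u)$, $N(v)$, $C_u$, and $C_v$, keeping in mind that $C_u$ and $C_v$ may overlap and that the stray vertex $z$ could a priori be attached to $W$ through either component. The other two parts become routine once \Cref{prop:suff-cond-non-neighbors} and the $G+uv$ construction are in place.
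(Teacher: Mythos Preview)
Your proposal is correct and follows essentially the same approach as the paper: both use \Cref{prop:suff-cond-non-neighbors} to bound $|V(C_u)|$ and $|V(C_v)|$, and both show $G+uv$ is consistent by arguing that any witnessing $k$-set $S$ would have to sit inside $V(C_u)\cup V(C_v)$. Your treatment of the middle ``in particular'' clause is more careful than the paper's --- the paper simply asserts that the relevant component(s) form a subgraph of $G[V(C_u)\cup V(C_v)]$, whereas you spell out the closedness argument --- but this is added detail, not a different idea.
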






    

\begin{proof}
    First, observe that $v$ (resp. $u$) is not adjacent to any vertex in $C_u$ (resp. $C_v$). Therefore, $v$ is an isolated vertex in $C_u+v$. By~\Cref{prop:suff-cond-non-neighbors}, $|V(C_u)|\leq k-1$. Symmetrically, the same reasoning shows that $|V(C_v)|\leq k-1$. The connected component(s) containing $u$ and $v$ in $G-(N(u)\cap N(v))$ is a subgraph of $G[V(C_u)\cup V(C_v)]$, so it (they) has size (have size in total) at most $|V(C_u)\cup V(C_v)|\leq 2k-2$.

    Now, suppose that $|V(C_u)\cup V(C_v)|\leq k-1$. Let $G'=G+uv$, we will show that $G'$ is consistent with $\tup_k(G)$ and $\otup_k(G)$; in other words, $G$ is not uniquely reconstructible. If $G'$ is not consistent with some $k$-set of $G$, then it implies that there exists $S\in \otup_k(G)$ containing $u$ and $v$ such that $G'[S]$ is connected. Since $G[S]$ is disconnected and $G'=G+uv$, $uv$ must be a bridge in $G'[S]$ and $G[S]$ has exactly two connected components, one containing $u$, the other containing $v$. Therefore, $S$ cannot contain any common neighbor of $u$ and $v$, and $S\subseteq V(C_u)\cup V(C_v)$. This is a contradiction because $k=|S|\leq |V(C_u)\cup V(C_v)|\leq k-1$. This completes the proof.
    \qed
\end{proof}

\subsection{Uniqueness for triangle-free graphs}\label{sec:unique-triangle-free}

We start with showing that if in a triangle-free graph we have a vertex $v$ of large degree, then the neighborhood of $v$ is uniquely reconstructible.

\begin{lemma}\label{lem:k-1-r}
    Let $k$ be an integer with $k\geq 3$ and $r\geq 3k-6$. The graph $K_{1,r}$ is uniquely reconstructible in the class of triangle-free graphs.
\end{lemma}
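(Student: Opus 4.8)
The plan is to first decode the $k$-sets of $K_{1,r}$. Writing $c$ for the center, a $k$-subset $S$ of $V$ induces a connected subgraph of $K_{1,r}$ exactly when $c\in S$: the leaves are pairwise non-adjacent, whereas $c$ together with any $k-1$ leaves induces a connected star. So $\tup_k(K_{1,r})=\{S\in\binom{V}{k}:c\in S\}$ and $\otup_k(K_{1,r})=\{S\in\binom{V}{k}:c\notin S\}$, and note $|V|=r+1>k$ since $k\ge 3$ and $r\ge 3k-6$. Fixing any triangle-free graph $G'$ with $\tup_k(G')=\tup_k(K_{1,r})$ and $\otup_k(G')=\otup_k(K_{1,r})$, the goal is to show $G'=K_{1,r}$.

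I would run three short steps. First, $G'$ is connected: for $w\ne c$ take any $k$-set containing $c$ and $w$; it lies in $\tup_k(G')$, so $w$ is in the component of $c$. Second, every connected component of $G'-c$ has at most $k-1$ vertices, since a component with at least $k$ vertices would contain a connected induced subgraph on exactly $k$ vertices (delete leaves of a spanning tree one at a time), hence a connected $k$-set avoiding $c$, contradicting $\tup_k(G')=\tup_k(K_{1,r})$. Third, if every component of $G'-c$ is a single vertex, then $G'-c$ has no edges, so connectivity of $G'$ makes $c$ adjacent to all of $V-\{c\}$ and $G'=K_{1,r}$; it thus remains to rule out the existence of a component $C$ of $G'-c$ with $2\le|C|\le k-1$.

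In that remaining case I would use triangle-freeness to find $w\in C$ with $wc\notin E(G')$: by connectivity $c$ has a neighbor in $C$, and were $c$ adjacent to all of $C$, an edge of the connected graph $G'[C]$ would close a triangle with $c$. As $C$ is a component of $G'-c$ and $w\not\sim c$, every neighbor of $w$ lies in $C-\{w\}$, so $w$ has at most $k-2$ neighbors in $G'$. Then $|V-\{c,w\}-N_{G'}(w)|\ge(r+1)-2-(k-2)=r-k+1\ge 2k-5\ge k-2$ (here $r\ge 3k-6$ and $k\ge 3$ are used), so I can pick a $(k-2)$-set $X$ disjoint from $\{c,w\}\cup N_{G'}(w)$; then $S=\{c,w\}\cup X$ is a $k$-set with $c\in S$ in which $w$ is isolated, so $G'[S]$ is disconnected --- contradicting $S\in\tup_k(G')$. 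The real work is concentrated in this last step: constructing the disconnected $k$-set through $c$ while respecting both triangle-freeness (which guarantees a suitable low-degree $w$) and the size budget (which is exactly where $r\ge 3k-6$ enters); the earlier steps are routine bookkeeping, and since we work with labeled graphs the center $c$ is pinned down by the $k$-sets, so no ``up to isomorphism'' subtlety arises.
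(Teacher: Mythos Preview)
Your proof is correct, and it takes a genuinely different route from the paper's. Both arguments first read off $\tup_k(K_{1,r})$ and $\otup_k(K_{1,r})$ and then derive a contradiction from a hypothetical triangle-free $G'$ with the same $k$-sets, but they produce opposite kinds of witnesses. The paper bounds the set $A$ of non-neighbours of the center by $|A|\le k-2$, then shows that any $a\in A$ must be adjacent to all but at most $k-3$ vertices of $B=N_{G'}(c)$; since $|B|\ge r-(k-2)\ge 2k-4$, this forces $a$ to have $k-1$ neighbours among the leaves, yielding a \emph{connected} $k$-set avoiding $c$, which is forbidden. You instead bound the components of $G'-c$ by $k-1$, locate via triangle-freeness a vertex $w$ in a nontrivial component with $cw\notin E(G')$ and $\deg_{G'}(w)\le k-2$, and build a \emph{disconnected} $k$-set containing $c$ by padding $\{c,w\}$ with $k-2$ non-neighbours of $w$.

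What each approach buys: the paper's counting argument is self-contained and makes the role of the threshold $3k-6$ transparent through the chain $|B|\ge r-|A|\ge 2k-4$. Your argument is arguably more structural (components of $G'-c$) and, as a side effect, is slightly sharper: the only inequality you actually need is $r-k+1\ge k-2$, i.e.\ $r\ge 2k-3$, which for $k\ge 4$ is weaker than $r\ge 3k-6$. Either way the triangle-free hypothesis is used exactly once, and in complementary places: the paper uses it only to conclude $G'=K_{1,r}$ once $A=\emptyset$, whereas you use it to manufacture the low-degree vertex $w$ inside a nontrivial component.
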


\begin{proof}
Let $r\geq 3k-6$. Suppose that there exists $G$ that is consistent with $\tup_k(K_{1,r})$ and $\otup_k(K_{1,r})$. Let $V$ be the set of vertices of $K_{1,r}$ and $v$ be the vertex in $K_{1,r}$ with degree $r$. For every $S\subseteq V\setminus \{v\}$ such that $|S|=k-1$, at least one vertex of $S$ must be adjacent to $v$ since $S\cup\{v\}\in\tup_k(K_{1,r})$. Let $A$ be the set of the vertices non-adjacent to $v$. Then $A$ has size at most $k-2$. If $A=\emptyset$, then by the fact that we consider only triangle-free graphs, we already have that $G$ is exactly $K_{1,r}$.

So suppose that $A\neq\emptyset$. Let $B=V\setminus(A\cup\{v\})$. For $a\in A$ and $B'\subseteq B$ such that $|B'|=k-2$, there is at least one vertex in $B'$ adjacent to $a$ since we have $B'\cup \{a,v\}\in\tup_k(K_{1,r})$ and $av\notin E(G)$. Therefore, there are at most $k-3$ vertices in $B$ non-adjacent to $a$. 
Consequently, the degree of any vertex $a\in A$ is at least $|B|-k+3$, and we have that $|B|=r-|A|\geq 3k-6-k+2=2k-4$. Thus $a$ is adjacent to at least $k-1$ vertices of $B$, let $b_1,\ldots,b_{k-1}$ be neighbors of $a$ in $B$. Then $\{a,b_1\ldots,b_{k-1}\}$ induces a connected subgraph in $G$, but $\{a,b_1\ldots,b_{k-1}\}\notin \tup_k(K_{1,r})$ as the only connected $k$-sets are the ones containing $v$, a contradiction.
\qed
\end{proof}




Now, we are ready to prove that triangle-free graphs are uniquely reconstructible.

\begin{theorem}
    Let $k$ be an integer with $k\geq 3$. If $G$ is a triangle-free connected graph on at least $(2k-2)(3k-7)^2+3k-6$ vertices, then $G$ is uniquely reconstructible in the class of triangle-free graphs. 
\end{theorem}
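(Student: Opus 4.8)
The plan is to prove the stronger claim that $G$ is the \emph{only} triangle-free graph $G'$ (necessarily on $V$) consistent with $\tup_k(G)$ and $\otup_k(G)$; fix such a $G'$. The argument runs along a degree dichotomy --- whether or not there is a vertex of degree at least $3k-6$ --- and combines the fake/clear non-neighbour machinery of \Cref{lem:struct-nedges-nunique} and \Cref{cor:non-edge-cond} with the high-degree reconstruction of \Cref{l:k3free_large_deg_graph} and the neighbourhood-recovery idea behind \Cref{lem:k-1-r}.

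First I would record two reductions. \emph{(a) $G'$ is connected.} If not, the components of $G'$ partition $V$; since $G$ is connected some edge $xy\in E(G)$ joins two of them, and extending $\{x,y\}$ to a connected $k$-set of $G$ exhibits a $k$-set connected in $G$ but disconnected in $G'$. \emph{(b) If $uv\notin E(G)$ and $N_G(u)\cap N_G(v)=\emptyset$, then $uv\notin E(G')$.} Indeed $G-(N_G(u)\cap N_G(v))=G$ is connected, so the component of $G$ containing $u$ and $v$ is all of $V$; if $u,v$ were fake neighbours, \Cref{lem:struct-nedges-nunique} would force $n\le 2k-2$, contradicting the hypothesis on $n$, so $u,v$ are clear non-neighbours and \Cref{cor:non-edge-cond} applies. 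Because $G$ and $G'$ share all $k$-sets and, by (a), both are connected, statement (b) also holds with the roles of $G$ and $G'$ swapped; in particular, every edge of $G$ missing from $G'$ joins two vertices having a common neighbour \emph{in $G'$}, and symmetrically.

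Suppose now that $G$ has a vertex $v$ with $\deg_G(v)\ge 3k-6$, and put $N=N_G(v)$, which is independent as $G$ is triangle-free. Each $(k-1)$-subset $X\subseteq N$ spans a star $X\cup\{v\}\in\tup_k(G)=\tup_k(G')$, so $G'[X\cup\{v\}]$ is connected; this fails if $k-1$ vertices of $N$ are non-neighbours of $v$ in $G'$ (then $v$ would be isolated in that induced subgraph), so $N^{*}:=N\cap N_{G'}(v)$ has at least $(3k-6)-(k-2)=2k-4$ vertices. Moreover $N_{G'}(v)\subseteq N$: if $vw\in E(G')$ with $w\notin N_G[v]$, triangle-freeness of $G'$ gives $N^{*}\cap N_{G'}(w)=\emptyset$, and comparing the status of the $k$-sets $\{v,w\}\cup Y$ in $G$ and in $G'$ over $(k-2)$-subsets $Y\subseteq N^{*}$ forces $w$ to have at least $k-1$ $G$-neighbours inside $N^{*}$; then the star $Z\cup\{w\}$ with $Z\subseteq N^{*}\cap N_G(w)$, $|Z|=k-1$ (possible since $2k-4\ge k-1$ by $k\ge 3$), is connected in $G$ but has $w$ isolated in $G'$, a contradiction. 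Hence $N_{G'}(v)=N^{*}$ consists of at least $2k-4$ common neighbours of $v$ in $G$ and in $G'$; fixing $2k-4$ of them and invoking \Cref{l:k3free_large_deg_graph}, which allows at most one triangle-free consistent graph in which $v$ is adjacent to all of them, we get $G'=G$.

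It remains to treat $\Delta(G)\le 3k-7$; by the previous paragraph applied to $G'$ we may also assume $\Delta(G')\le 3k-7$, for otherwise already $G'=G$. Suppose towards a contradiction that $G'\ne G$, and pick $uv$ in the symmetric difference, say $uv\in E(G)\setminus E(G')$. By \Cref{cor:non-edge-cond} (applied to $G'$) $u,v$ cannot be clear non-neighbours in $G'$, so they are fake neighbours there; by \Cref{lem:struct-nedges-nunique}, the union $P$ of the components of $u$ and of $v$ in $G'-K$, with $K:=N_{G'}(u)\cap N_{G'}(v)$, satisfies $|P|\le 2k-2$, and in $G'$ the set $K$ (of size $\le\deg_{G'}(u)\le 3k-7$) separates $P$ from everything else. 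The technical heart of the proof is then to show that every vertex of $V$ lies within bounded distance of $P$ in $G'$: a vertex $q$ far from $P$ in $G'$ would be a clear non-neighbour in $G'$ of each vertex of $P$ (by reduction (b) in $G'$), yet a $G$-path from $q$ to $P$ padded to a connected $k$-set of $G$ would have to stay connected in $G'$ and hence pass through $K$, and iterating this around $K$ yields a contradiction. A direct count then gives $|V|\le (2k-2)(3k-7)^{2}+3k-6$, contradicting the hypothesis, so $G'=G$. I expect this final confinement of $V$ to a bounded neighbourhood of $P$ to be the main obstacle: a priori the disagreements between $G$ and $G'$, and the vertices dragged along by the fake-neighbour structure, could be scattered across a large sparse graph, and pinning everything down needs a careful interplay between the component bounds of \Cref{lem:struct-nedges-nunique} and the connectivity certificates carried by the prescribed $k$-sets; by contrast, the two reductions and the high-degree case are comparatively routine.
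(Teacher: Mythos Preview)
Your overall architecture---the degree dichotomy, followed by \Cref{l:k3free_large_deg_graph} in the high-degree case and the fake-neighbour machinery in the low-degree case---matches the paper. Your high-degree argument is correct; it is slightly more hands-on than the paper's (which invokes \Cref{lem:k-1-r} to conclude $G'[N_G[v]]=G[N_G[v]]$ directly, then feeds $2k-4$ common neighbours into \Cref{l:k3free_large_deg_graph}), but the two routes are equivalent and yours is self-contained.

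The low-degree case is where you diverge, and the sketch you give is not how the paper closes the argument. You propose to show that every vertex lies within bounded $G'$-distance of the small set $P$ and then ``directly count'' to obtain $|V|\le (2k-2)(3k-7)^{2}+3k-6$; but a ball-of-radius argument around $P$ with $\Delta\le 3k-7$ does not produce this bound, and your ``iterating around $K$'' step is left as a hope rather than a mechanism. The paper never tries to upper-bound $|V|$. It does the opposite: it \emph{spends} the lower bound on $|V|$ to manufacture a witness. Concretely (phrased in your setup, with $uv\notin E(G')$ and $K=N_{G'}(u)\cap N_{G'}(v)$): since $|K|\le 3k-7$ and $G'$ is connected with $\Delta(G')\le 3k-7$, the graph $G'-K$ has at most $(3k-7)^{2}$ components; as $|V\setminus K|\ge (2k-2)(3k-7)^{2}+1$, some component $C$ has size at least $2k-1$, hence cannot contain $u$ or $v$ (their components have size $\le k-1$ by \Cref{lem:struct-nedges-nunique}). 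By \Cref{prop:suff-cond-non-neighbors} applied in $G'$, every $w\in C$ is a clear non-neighbour of both $u$ and $v$, so $uw,vw\notin E(G)$ by \Cref{cor:non-edge-cond}. Now pick $z\in K$ with a $G'$-neighbour in $C$, and take a $k$-set $S=\{u,z\}\cup S'$ with $S'\subseteq C$ connected in $G'$ and containing that neighbour; $S$ is connected in $G'$, hence in $G$, but $u$ has no $G$-edge to $S'\subseteq C$, forcing $uz\in E(G)$. Symmetrically $vz\in E(G)$, so $z$ is a common $G$-neighbour of $u$ and $v$, contradicting $uv\in E(G)$ by triangle-freeness of $G$.

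So the missing idea is precisely this ``one large component as a certificate'' trick: the vertex-count hypothesis is used once, via pigeonhole over at most $(3k-7)^{2}$ components, to produce a connected block $C$ of size $\ge 2k-1$ disjoint from $u,v$; $C$ then simultaneously certifies all the non-edges you need and supplies the $k$-set that pins down a common neighbour. Replacing your distance-bounding paragraph with this argument completes the proof.
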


\begin{proof}
    Suppose by that there exists $G'$ that is consistent with $\tup_k(G)$ and $\otup_k(G)$. If an induced subgraph $G[V']$ of $G$ is uniquely reconstructible, then $G'[V']=G[V]$. Therefore, if there is a vertex $v$ of degree at least $3k-6$, then by \Cref{lem:k-1-r}, $G[N[v]]$ is uniquely determined. In this case, $G'$ can be uniquely reconstructed by \Cref{l:k3free_large_deg_graph} since $3k-6\geq 2k-4$. In other words, $G'=G$. Now, we can assume that the maximum degree of $G$ is at most $3k-7$.  
    
    First, we show that if $uv\notin E(G)$, then $uv\notin E(G')$. If $u,v$ are clear non-neighbors, then by \cref{cor:non-edge-cond}, $uv\notin E(G')$. So now assume that $u$ and $v$ are fake neighbors. By \Cref{lem:struct-nedges-nunique}, the connected component(s) of $G-(N(u)\cap N(v))$ containing $u$ and $v$ have size at most $2k-2$. In particular, it implies that $u$ and $v$ have a common neighbor since $|V(G)|>2k-2$. We will show that $u$ and $v$ also have a common neighbor in $G'$, which will prove that $uv\notin E(G')$ since $G'$ must be triangle-free.
    
    Observe that $|N(u)\cap N(v)|\leq 3k-7$ due to the bound on the maximum degree of $G$. Furthermore, the number of connected components of $G-(N(u)\cap N(v))$ is at most $(3k-7)^2$, again by the bound on the maximum degree. Thus, there is at least one connected component $C$ of size at least $2k-1$ since $|V(G)\setminus (N(u)\cap N(v))|\geq (2k-2)(3k-7)^2+3k-6-3k+7=(2k-2)(3k-7)^2+1$. As a result, $C$ cannot contain $u$ or $v$. Moreover, for every vertex $w\in V(C)$, the $u$ (resp. $v$) and $w$ are clear non-neighbors due to \Cref{prop:suff-cond-non-neighbors} since $u$ (resp. $v$) are isolated in $C+u$ (resp. $C+v$). By \Cref{cor:non-edge-cond}, the edges $uw$ and $vw$ are also not in $E(G')$ for every $w\in V(C)$. Since $G$ is connected, there exists $w\in V(C)$ and $z\in N(u)\cap N(v)$ such that $wz\in E(G)$. Now, let us show that $zu$ and $zv$ are also present in $E(G')$.
    
    There exists $S\in\tup_k(G)$ such that $u,z\in S$ and $S-\{u,z\}\subseteq V(C)$. If $zu\notin E(G')$, then $G'[S]$ is disconnected since $uw$ is a non-edge in $G'$ for every $w\in S-\{u,z\}\subseteq V(C)$. This contradicts the fact that $G'$ is consistent with $\tup_k(G)$ and $\otup_k(G)$. Therefore, $zu\in E(G')$ and similarly, the same holds for $zv$. Finally, since $z$ is a common neighbor of $u$ and $v$ in $G'$ and $G'$ is triangle-free, $uv$ is a non-edge in $G'$.

    By symmetry, if $uv\notin E(G')$, then $uv\notin E(G)$ as we can treat $G$ as a graph consistent with $\tup_k(G')$ and $\otup_k(G')$. Therefore $uv\in E(G)$ if and only if $uv\in E(G')$, which completes the proof.
    \qed
\end{proof}

\section{NP-hardness}

In this section we will prove that for $k\geq4$, if we are not given the full set of connected/disconnected $k$-sets, then it is NP-hard to determine, whether there exists a graph consistent with the given $k$-sets. More formally, we define the following problem.

\noindent\fbox{%
    \parbox{\textwidth}{
\textsc{$k$-Reconstruction}

\emph{Input:} Set $V$, sets $\cgS_k,\overline{\cgS}_k\subseteq {V\choose k}$.

\emph{Question:} Does there exists a graph $G$ on $V$ such that for every $S\in \cgS_k$, the graph $G[S]$ is connected, and for every $S'\in \overline{\cgS}_k$, the graph $G[S']$ is disconnected?
}
}

Let us point out that in contrary to previous sections, we do not assume here that $\cgS_k\cup\overline{\cgS}_k={V\choose k}$. We prove the following.

\begin{theorem}
    Let $k\geq 4$. Then the \textsc{$k$-Reconstruction} problem is NP-hard. Moreover, there is no algorithm that solves \textsc{$k$-Reconstruction} on instances $(V,\cgS_k,\overline{\cgS}_k)$ with $n$ vertices and $t=|\cgS_k\cup \overline{\cgS}_k|$ in time $2^{o(n)}\cdot n^{O(1)}$ neither $2^{o(t)}\cdot n^{O(1)}$, unless the ETH fails.
\end{theorem}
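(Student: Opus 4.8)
The plan is to reduce from a known NP-hard, ETH-hard problem with a linear-size witness and a linear number of clauses; the natural candidate is \textsc{3-SAT} (or a sparse variant like \textsc{$(\leq 3)$-SAT} where each variable occurs $O(1)$ times), for which a satisfiable formula with $N$ variables and $M$ clauses admits no $2^{o(N+M)}$-time algorithm under ETH. Given a formula $\varphi$, I would build an instance $(V, \cgS_k, \overline{\cgS}_k)$ of \textsc{$k$-Reconstruction} with $|V| = O(N+M)$ and $|\cgS_k \cup \overline{\cgS}_k| = O(N+M)$; then a $2^{o(n)} n^{O(1)}$ (or $2^{o(t)} n^{O(1)}$) algorithm would solve \textsc{3-SAT} in $2^{o(N+M)}$ time, contradicting ETH. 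Since we only need to specify connectivity for \emph{some} $k$-sets, we have a lot of freedom: most of $\binom{V}{k}$ carries no constraint.

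The construction I would use: for each variable $x_i$ introduce a small constant-size gadget consisting of two vertices $t_i, f_i$ (and possibly a few auxiliary ``padding'' vertices shared globally, call them $p_1,\dots,p_{k-3}$ or similar, needed to pad $k$-sets up to size $k$), with a pair of $k$-sets that force exactly one of two ``states'' — intuitively, a $k$-set that must be connected can be made connected only by including edge $t_i p_1 \cdots$ or edge $f_i p_1 \cdots$, and a disconnected $k$-set forbidding both, so that exactly one of $t_i, f_i$ is ``wired in''. For each clause $C_j = \ell_{j,1} \vee \ell_{j,2} \vee \ell_{j,3}$ introduce a clause gadget: a $k$-set built from a clause-vertex $c_j$, the three literal-vertices corresponding to $\ell_{j,1},\ell_{j,2},\ell_{j,3}$ (i.e. the appropriate $t_i$ or $f_i$), and padding vertices, declared \emph{connected}; the only way to connect it is for at least one literal vertex to be in the ``active'' state chosen by its variable gadget. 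The padding vertices $p_1,\dots$ are given a fixed forced structure (a path, say) by a few more $k$-sets so they behave predictably, and crucially are reused across all gadgets to keep $|V|$ and $t$ linear. One then argues: from a satisfying assignment, set the variable gadgets accordingly and add exactly the edges dictated, yielding a graph consistent with all declared $k$-sets; conversely, from any consistent graph, read off the state of each variable gadget — the disconnected $k$-sets guarantee the state is well-defined (not both), and the connected clause $k$-sets guarantee the induced assignment satisfies $\varphi$.

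The key steps in order: (1) fix the source problem and recall its ETH lower bound ($2^{o(N+M)}$, with $N+M = \Theta(N)$ for sparse SAT); (2) define $V$ — variable vertices, clause vertices, a constant number of global padding vertices — and verify $|V| = O(N+M)$; (3) define the variable gadget $k$-sets (one or two per variable) and prove they enforce a clean Boolean choice in any consistent graph; (4) define the clause gadget $k$-sets (one per clause) and prove they enforce clause satisfaction; (5) add the $O(1)$ padding-structure $k$-sets; (6) prove the two directions of correctness; (7) tally $|V| = O(N+M)$ and $t = |\cgS_k \cup \overline{\cgS}_k| = O(N+M)$, and conclude via the ETH hypothesis for both the $2^{o(n)}$ and $2^{o(t)}$ bounds (the same reduction gives both since both parameters are linear in $N+M$).

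The main obstacle is designing gadgets that work with $k$-sets rather than with full edge information: a $k$-set constraint only sees the induced subgraph on $k$ vertices, so to ``force'' an edge we must arrange that \emph{every} induced graph on those $k$ vertices consistent with the surrounding constraints is connected iff the edge is present — this requires controlling which other edges among those $k$ vertices could be present, which is exactly where \Cref{prop:unimportant} (importance of unknown edges) is the right tool. The delicate part is that because we do \emph{not} specify all $k$-sets, an adversarial consistent graph could add ``spurious'' edges we never constrained; I would handle this by choosing gadget $k$-sets whose vertices are pairwise non-adjacent in the intended solution except along a single controlled path, and by including enough disconnected $k$-sets that any spurious edge inside a gadget would itself create a forbidden connected $k$-set. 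Making the padding vertices simultaneously (a) shared across linearly many gadgets and (b) rigid enough that sharing them does not let one gadget's edges leak into another's constraint is the crux; I expect this to require the padding to be a short path or independent set of size $k-3$ together with a handful of guard $k$-sets, and verifying non-interference — that the only ``important'' unknown edges for a clause $k$-set lie in the intended place — is where most of the proof's care will go. The requirement $k \geq 4$ should enter precisely here: with $k = 3$ the $2$-SAT reduction of Bastide et al. already solves everything, whereas for $k \geq 4$ there is enough room inside a $k$-set to encode a $3$-clause plus the machinery to make it rigid.
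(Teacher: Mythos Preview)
Your plan matches the paper's approach --- a linear reduction from \textsc{3-SAT} with two literal vertices per variable, shared constant-size padding, one disconnected $k$-set per variable enforcing at-most-one, and one connected $k$-set per clause --- and you correctly identify rigidity against spurious edges as the crux. There is, however, one concrete gap in your clause gadget. You propose a $k$-set $\{c_j,z_1,z_2,z_3,p_1,\dots\}$ declared connected, arguing that connectivity holds iff at least one literal is active. But if only $z_1$ is active (adjacent to the padding/hub) while $z_2,z_3$ are not, then $z_2$ and $z_3$ are isolated in that induced subgraph and the set is disconnected regardless --- unless the three literal vertices are already mutually adjacent. The paper fixes exactly this: it makes all literal vertices of \emph{distinct} variables pairwise adjacent (a clique on $\{x_i,y_i:i\in[n]\}$ minus the matching $\{x_iy_i\}$), so $\{z_1,z_2,z_3\}$ is always a triangle and the clause $k$-set is connected iff some $z_j$ reaches the hub. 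Relatedly, the paper dispenses with per-clause vertices $c_j$ entirely: it uses one global hub $v$ with padding $u_1,\dots,u_{k-3}$ adjacent to it, and the clause $k$-set is simply $\{z_1,z_2,z_3,v,u_1,\dots,u_{k-4}\}$.

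For rigidity the paper's mechanism is more systematic than ad-hoc guard sets and worth knowing: every vertex whose incidences must be pinned down ($v$, the $u_i$, the $w_i$, the $x_i$, the $y_i$) receives $k-1$ private pendant neighbours, and then the paper adds \emph{all} of $\tup_k$ and $\otup_k$ of the relevant small induced subgraphs (each on $O(k^2)$ vertices, hence $O(1)$ many $k$-sets for fixed $k$) to force the intended edges and non-edges --- specifically $vu_i\in E$, $vw_i\notin E$, and $x_iu_j,y_iu_j\notin E$. The pendant trick is what makes \Cref{prop:conn-cond}-style arguments bite: a vertex with $k-1$ private leaves sits in a connected $k$-set that any unintended extra edge would contradict. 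This keeps both $|V|$ and $t$ linear in the number of variables plus clauses, so the ETH conclusion goes through exactly as you outline.
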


\begin{proof}
    We will reduce from $3$-SAT. Let $\phi$ be an instance of $3$-SAT with $n$ variables $x_1,\ldots,x_n$ and $m$ clauses. We can assume that all clauses contain exactly $3$ variables.

    We construct an instance $(V,\cgS_k,\overline{\cgS}_k)$ of \textsc{$k$-Reconstruction} as follows. First, we define the vertex set $V$. For every variable $x_i$, we introduce two vertices $x_i, y_i$ and for $j\in [k-1]$ we introduce $x_i^j$ and $y_i^j$. Moreover, we introduce vertex $v$ and and for all $i\in [k-3],j\in [k-1]$, we introduce vertices $u_i,u_i^j,w_i,w_i^j,v^j$. This completes the definition of the vertex set $V$. Note that $|V|=O(n)$.

    \textbf{Auxiliary partial graph.} We will define an auxiliary partial graph $H$ on $V$. First, for every $i\in [k-3]$ we add edges $u_iv$, and for every $j\in [k-1]$, we add edges $u_iu_i^j,w_iw_i^j,vv^j,x_ix_i^j,y_iy_i^j$. Moreover, we make each $w_i$ adjacent to all $x_j,y_j$ and we introduce all edges $x_ix_j,y_iy_j,x_iy_j$ for $i\neq j$.
    The edges $vx_i,vy_i$ are unknown. All the remaining pairs of vertices are non-edges (see \Cref{fig:hardness}).

\begin{center}
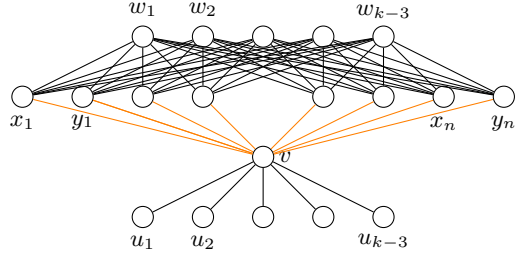
\begin{figure}[h]
    \centering
    \begin{tikzpicture}[every node/.style={draw,circle,fill=white,inner sep=0pt,minimum size=8pt},every loop/.style={},scale=0.8]
    \node[label=right:\footnotesize{$v$}] (v) at (0,0) {};
    \node[label=below:\footnotesize{$u_1$}] (u1) at (-2,-1) {};
    \node[label=below:\footnotesize{$u_2$}] (u2) at (-1,-1) {};
    \node (u3) at (0,-1) {};
    \node (u4) at (1,-1) {};
    \node (u5) at (2,-1) {};
    \node[draw=none,fill=none] (u5') at (2,-1.4) {\footnotesize{$u_{k - 3}$}};
    \draw (v)--(u1);
    \draw (v)--(u2);
    \draw (v)--(u3);
    \draw (v)--(u4);
    \draw (v)--(u5);

    \node[label=below:\footnotesize{$x_1$}] (x1) at (-4,1) {};
    \node[label=below:\footnotesize{$y_1$}] (y1) at (-3,1) {};
    \node (x2) at (-2,1) {};
    \node (y2) at (-1,1) {};
    \node (x3) at (1,1) {};
    \node (y3) at (2,1) {};
    \node[label=below:\footnotesize{$x_n$}] (xn) at (3,1) {};
    \node[label=below:\footnotesize{$y_n$}] (yn) at (4,1) {};

    \draw[color=orange] (v)--(x1);
    \draw[color=orange] (v)--(y1);
    \draw[color=orange] (v)--(x2);
    \draw[color=orange] (v)--(y1);
    \draw[color=orange] (v)--(y2);
    \draw[color=orange] (v)--(x3);
    \draw[color=orange] (v)--(y3);
    \draw[color=orange] (v)--(xn);
    \draw[color=orange] (v)--(yn);

    \node[label=above:\footnotesize{$w_1$}] (w1) at (-2,2) {};
    \node[label=above:\footnotesize{$w_2$}] (w2) at (-1,2) {};
    \node (w3) at (0,2) {};
    \node (w4) at (1,2) {};
    \node (w5) at (2,2) {};
    \node[draw=none,fill=none] (w5') at (2,2.4) {\footnotesize{$w_{k - 3}$}};

    \foreach \k in {1,2,3,4,5}
    {
    \draw (w\k)--(x1);
    \draw (w\k)--(y1);
    \draw (w\k)--(x2);
    \draw (w\k)--(y2);
    \draw (w\k)--(x3);
    \draw (w\k)--(y3);
    \draw (w\k)--(xn);
    \draw (w\k)--(yn);
    }
    
    \end{tikzpicture}
    \caption{\label{fig:hardness}The auxiliary partial graph $H$. Orange edges denote the unknown edges. We ommited here $k-1$ private neighbors of each vertex in the figure. Furthermore, the vertices from $\{x_i,y_j \ | \ i,j\in[n]\}$ induce a clique with the matching $\{x_iy_i \ | \ i\in [n]\}$ removed.}
\end{figure}
\end{center}

    \textbf{Defining connected and disconnected $k$-sets.} We add the following $k$-sets to $\cgS_k,\overline{\cgS}_k$.
    \begin{enumerate}
        \item \textbf{Edges $vu_i$.} Let $G_1:=H[\{v,v^j,u_i,u_i^j \ | \ i\in[k-3],j\in[k-1]\}]$. Note that $G_1$ is a graph, since we do not have unknown edges inside the vertex set. We add all $k$-sets of $\tup_k(G_1)$ to $\cgS_k$ and all $k$-sets of $\otup_k(G_1)$ to $\overline{\cgS}_k$.
        \item \textbf{Non-edges $vw_i$.} Let $G_2:=H[\{v,v^j,w_i,w_i^j \ | \ i\in[k-3],j\in [k-1]\}]$, again $G_2$ is a graph. We add all $k$-sets of $\tup_k(G_2)$ to $\cgS_k$ and all $k$-sets of $\otup_k(G_2)$ to $\overline{\cgS}_k$.
        \item \textbf{Non-edges $x_iu_j, y_iu_j$.} Let $i\in [n]$. We define $G^i=H[x_i,x_i^j,y_i,y_i^j,u_\ell,u_\ell^j \ | \ j\in[k-1],\ell\in[k-3]\}]$, and $G^i$ is a graph. We add all $k$-sets of $\tup_k(G^i)$ to $\cgS_k$ and all $k$-sets of $\otup_k(G^i)$ to $\overline{\cgS}_k$.
        \item \textbf{Variables.} For every $i\in [n]$ we add the set $\{x_i,y_i,v,u_1,\ldots,u_{k-3}\}$ to $\overline{\cgS}_k$.
        \item \textbf{Clauses.} For every clause with literals $\ell_1,\ell_2,\ell_3$, for $j\in[3]$, let $z_j=x_i$ if $\ell_i=x_i$ and $z_j=y_i$ if $\ell_j=\neg x_i$. We add the set $\{z_1,z_2,z_3,v,u_1,\ldots,u_{k-4}\}$ to $S_k$. 
    \end{enumerate}

    This completes the definition of $\cgS_k$ and $\overline{\cgS}_k$. Let us verify the equivalence of the instances. 

    First assume that there is a graph $G$ on $V$ that satisfies all the $k$-sets from $\cgS_k,\overline{\cgS}_k$, i.e., for every $S\in \cgS_k$ the graph $G[S]$ is connected and for every $S'\in \overline{\cgS}_k$, the graph $G[S']$ is disconnected. Before we define the truth assignment for variables of $\phi$, let us discuss some properties of $G$.

   \begin{claim}
       Let $i\in [k-3]$. Then we have that $u_iv\in E(G)$ and $w_iv\notin E(G)$.
   \end{claim}
   \begin{proof}
       Suppose first that $u_iv\notin E(G)$. Since $\{v,u_i,u_i^1,\ldots,u_i^{k-2}\}\in \cgS_k$, then there is some $u_i^j$ adjacent to $v$, without loss of generality $u_i^1v\in E(G)$. We also know that $\{u_i,u_i^1,\ldots,u_i^{k-1}\}$ induces a connected subgraph of $G$. There is a vertex $u_i^j\neq u_i^1$, such that the set $\{u_i,u_i^1,\ldots,u_i^{k-1}\}\setminus \{u_i^j\}$ still induces a connected subgraph (we can take a spanning tree and remove a leaf which is not $u_i^1$). Thus the graph induced by $\{v,u_i^1,\ldots,u_i^{k-1}\}\setminus \{u_i^j\}$ is connected, which contradicts the fact that we added this $k$-set to $\overline{\cgS}_k$ in 1.

       Now suppose that $w_iv\in E(G)$. Since $\{v,v^1,\ldots,v^{k-1}\}\in \cgS_k$, there is some $v^j$, say $v^{k-1}$, such that the graph induced by $\{v,v^1,\ldots,v^{k-2}\}$ is still connected, and thus the graph induced by $\{w_i,v,v^1,\ldots,v^{k-2}\}$ is connected, a contradiction.
   \end{proof}

   \begin{center}
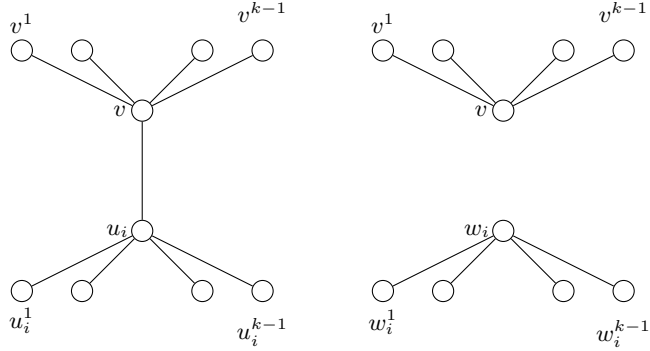
\begin{figure}[h]
    \centering
    \begin{tikzpicture}[every node/.style={draw,circle,fill=white,inner sep=0pt,minimum size=8pt},every loop/.style={},scale=0.8]
    \node[label=left:\footnotesize{$v$}] (v) at (0,0) {};
    \node[label=left:\footnotesize{$u_i$}] (u) at (0,-2) {};
    \node[label=below:\footnotesize{$u_i^1$}] (u1) at (-2,-3) {};
    \node (u2) at (-1,-3) {};
    \node (u3) at (1,-3) {};
    \node[label=below:\footnotesize{$u_i^{k-1}$}] (u4) at (2,-3) {};
    \node[label=above:\footnotesize{$v^{1}$}] (v1) at (-2,1) {};
    \node (v2) at (-1,1) {};
    \node (v3) at (1,1) {};
    \node[label=above:\footnotesize{$v^{k-1}$}] (v4) at (2,1) {};
    \draw (v)--(u);
    \foreach \k in {1,2,3,4}
    {
    \draw (v)--(v\k);
    \draw (u)--(u\k);
    }


    \node[label=left:\footnotesize{$v$}] (v) at (6,0) {};
    \node[label=left:\footnotesize{$w_i$}] (w) at (6,-2) {};
    \node[label=below:\footnotesize{$w_i^1$}] (w1) at (4,-3) {};
    \node (w2) at (5,-3) {};
    \node (w3) at (7,-3) {};
    \node[label=below:\footnotesize{$w_i^{k-1}$}] (w4) at (8,-3) {};
    \node[label=above:\footnotesize{$v^1$}] (v1) at (4,1) {};
    \node (v2) at (5,1) {};
    \node (v3) at (7,1) {};
    \node[label=above:\footnotesize{$v^{k-1}$}] (v4) at (8,1) {};
    \foreach \k in {1,2,3,4}
    {
    \draw (v)--(v\k);
    \draw (w)--(w\k);
    }
    
    \end{tikzpicture}
    \caption{\label{fig:hardness2}The auxiliary partial graph $H$ on vertices $v,v^j,u_i,u_i^j$ (left) and on $v,v^j,w_i,w_i^j$ (right).}
    \end{figure}
    \end{center}

Similarly, because of the $k$-sets added in 3., we can obtain the following.

    \begin{claim}
       Let $i\in [n], j\in [k-3]$. Then it holds that $x_iu_j\notin E(G)$ and $y_iu_j\notin E(G)$.
   \end{claim}

We also show that at most one of edges $x_iv,y_iv$ can be present in $G$.

   \begin{claim}
       Let $i\in [n]$. It holds at most one: $x_iv\in E(G)$ or $y_iv\in E(G)$.
   \end{claim}
   \begin{proof}
       Suppose that both $x_i,y_i$ are adjacent to $v$ in $G$. Recall that we also have that $u_jv\in E(G)$, for every $j\in [k-3]$. Therefore, the graph induced by the set $\{x_i,y_i,v,u_1,\ldots,u_{k-3}\}$ is connected, which contradicts the fact that we added this set to $\overline{\cgS}_k$ in 4.
   \end{proof}

   Now we are ready to define a truth assignment $\varphi$ of the variables of $\phi$. For $i\in [n]$, we set $\varphi(x_i)=1$ if $x_iv\in E(G)$, and $\varphi(x_i)=0$ if $y_iv\in E(G)$. If none of the edges is present in $G$, we set $\varphi(x_i)$ arbitrarily.

   Suppose there is some clause $C$ that is not satisfied by $\varphi$, let $\ell_1,\ell_2,\ell_3$ be its literals. For $j\in [3]$, let $z_j=x_i$ if $\ell_i=x_i$ and $z_j=y_i$ if $\ell_j=\neg x_i$. Recall that we added the set $\{z_1,z_2,z_3,v,u_1,\ldots,u_{k-4}\}$ to $\cgS_k$ in 5. Therefore, the graph induced by this set in $G$ must be connected. As we already observed, there are no edges between $z_1,z_2,z_3$ and $u_1,\ldots,u_{k-3}$. Thus at least one of $z_j$ must be adjacent to $v$. By the definition of $\varphi$ the literal $\ell_j$ is true and satisfies $C$, a contradiction.

   It remains to show that if there is a satisfying assignment $\varphi$ of $\phi$, then there is also a graph $G$ on $V$ that satisfies all tuples. We construct $G$ so that we take the partial graph $H$, and for the unknown edges, which are of type $x_iv,y_iv$, we decide $x_iv\in E(G),y_iv\notin E(G)$ if $\varphi(x_i)=1$, and $x_iv\notin E(G),y_iv\in E(G)$ otherwise. Let us verify that $G$ satisfies all $k$-sets from $\cgS_k,\overline{\cgS}_k$.

   Note that the $k$-sets added in 1.--3. do not contain any unknown edges of $H$ and by their definition they are clearly satisfied by $G$. Consider a $k$-set introduced in 4., so a set $\{x_i,y_i,v,u_1,\ldots,u_{k-3}\}$ added to $\overline{\cgS}_k$. By the definition of $G$, there are no edges between $x_i,y_i$ and $u_1,\ldots,u_{k-3}$. Moreover, we added only one of $x_iv,y_iv$ to the edge set. Since $x_iy_i\notin E(G)$, then one of $x_i,y_i$ is isolated, and the graph induced by $\{x_i,y_i,v,u_1,\ldots,u_{k-3}\}$ is indeed disconnected.

   It remains to verify the tuples added in 5. Let $\{z_1,z_2,z_3,v,u_1,\ldots,u_{k-4}\}$ be the set added to $\cgS_k$ for clause $C$ with literals $\ell_1,\ell_2,\ell_3$ such that for $j\in[3]$, $z_j=x_i$ if $\ell_i=x_i$ and $z_j=y_i$ if $\ell_j=\neg x_i$. Since literals $\ell_1,\ell_2,\ell_3$ correspond to different variables, we have that $z_1,z_2,z_3$ form a triangle. Moreover, $v$ is adjacent to all $u_1,\ldots,u_{k-4}$. Since the clause $C$ is satisfied, there is at least one literal $\ell_j$ set to true, and thus $vz_j\in E(G)$. Thus the graph induced by $\{z_1,z_2,z_3,v,u_1,\ldots,u_{k-4}\}$ is connected, which completes the proof.
   \qed
\end{proof}



\textbf{Aknowledgements.} This work started at the 2023 Structural Graph Theory workshop held at the IMPAN B\k{e}dlewo Conference Center in Poland in September 2023. This workshop was a part of STRUG: Stuctural Graph Theory Bootcamp, funded by the ``Excellence initiative - research university (2020-2026)'' of the University of Warsaw.

\bibliographystyle{plain}
\bibliography{biblio}

\end{document}